\documentclass[a4, 11pt] {amsart} 
\usepackage{mathrsfs}
\usepackage{amsmath}
\usepackage{amssymb}
\usepackage[all]{xy}
\usepackage[dvips]{graphicx, color}
\usepackage{amsmath}
\usepackage{amssymb}
\usepackage[all]{xy}
\usepackage{geometry}
\usepackage[dvips]{graphicx, color}
\usepackage{mathrsfs}  
\usepackage{enumitem}

\begin{document}

\numberwithin{equation}{section}
\newtheorem{thm}[equation]{Theorem}
\newtheorem{pro}[equation]{Proposition}
\newtheorem{prob}[equation]{Problem}
\newtheorem{qu}[equation]{Question}
\newtheorem{cor}[equation]{Corollary}
\newtheorem{con}[equation]{Conjecture}
\newtheorem{lem}[equation]{Lemma}
\theoremstyle{definition}
\newtheorem{ex}[equation]{Example}
\newtheorem{defn}[equation]{Definition}
\newtheorem{ob}[equation]{Observation}
\newtheorem{rem}[equation]{Remark}

\hyphenation{homeo-morphism} 

\newcommand{\calA}{\mathcal{A}} 
\newcommand{\calD}{\mathcal{D}} 
\newcommand{\calE}{\mathcal{E}}
\newcommand{\calC}{\mathcal{C}} 
\newcommand{\Set}{\mathcal{S}et\,} 
\newcommand{\Top}{\mathcal{T}\!op \,}
\newcommand{\Topst}{\mathcal{T}\!op\, ^*}
\newcommand{\calK}{\mathcal{K}} 
\newcommand{\calO}{\mathcal{O}} 
\newcommand{\calS}{\mathcal{S}} 
\newcommand{\calT}{\mathcal{T}} 
\newcommand{\Z}{{\mathbb Z}}
\newcommand{\C}{{\mathbb C}}
\newcommand{\Q}{{\mathbb Q}}
\newcommand{\R}{{\mathbb R}}
\newcommand{\N}{{\mathbb N}}
\newcommand{\F}{{\mathcal F}}
\newcommand{\PP}{{\mathbb P}}
\def\op{\operatorname}
\hfill

\title[Ranks of 
homotopy and cohomology groups] 
{Ranks of 
homotopy and cohomology groups 
for rationally elliptic spaces and algebraic varieties 
}

\author{Anatoly Libgober and Shoji Yokura}

\thanks{2010 MSC: 32S35, 55P62, 55Q40, 55N99.\\
Keywords: mixed Hodge structures, mixed Hodge polynomials, Hilali conjecture, rational homotopy theory.
\\
}

\date{}

\address{Department of Mathematics, University of Illinois, Chicago, IL 60607}

\email{libgober@math.uic.edu}

\address{
Graduate School of Science and Engineering, Kagoshima University, 1-21-35 Korimoto, Kagoshima, 890-0065, Japan
}

\email{yokura@sci.kagoshima-u.ac.jp}

\maketitle

\begin{abstract}
 We discuss inequalities between the values of \emph{homotopical and cohomological
 Poincar\'e polynomials} of the self-products of rationally elliptic spaces. 
For rationally elliptic quasi-projective
varieties, we prove
inequalities between the values of generating functions for the ranks of
the graded pieces of the weight and Hodge filtrations of the canonical mixed Hodge structures on
homotopy and cohomology groups.  
Several examples of such mixed
Hodge polynomials and related inequalities for rationally elliptic quasi-projective algebraic varieties
are presented. 
One of the consequences is that the homotopical
(resp. cohomological) mixed Hodge
polynomial of a rationally elliptic toric manifold is a sum
(resp. a product) of polynomials of projective
spaces. We introduce an invariant called
  \emph{stabilization threshold} $\frak{pp} (X;\varepsilon)$ for a
  simply connected rationally elliptic space $X$ and a positive real number $\varepsilon$, and we show that the
  Hilali conjecture implies that $\frak{pp} (X;1) \le 3$.
\end{abstract}

\section{Introduction}

A rationally elliptic space is a simply connected topological space $X$ such that 
$$\text{$\dim \left (\pi_*(X) \otimes \Q \right) < \infty$ and $\dim H^*(X;\Q) < \infty$}$$
 where $\pi_*(X) \otimes \Q := \sum_{i\ge1} \pi_i(X) \otimes \Q$ and  $H^*(X;\Q) :=\sum_{j \ge 0} H^j(X;\Q)$.
This interesting class of spaces has received considerable
attention, but a complete picture of structure, geometry or
invariants of spaces in this class appears to be far from clear.
Very strong restrictions on the ranks of homotopy group
were found a long time ago by J. B. Friedlander and S. Halperin (
see \cite{FH} and also
\cite{FHT} or \cite{FOT}). To recall them, let $x_i$ (resp. $y_j$) denote a basis of
$\pi_{\op{odd}}(X)\otimes \Q$ (resp. $\pi_{\op{even}}(X) \otimes \Q$) and let 
$n$
be the formal dimension of the space $X$, i.e., the maximal degree 
$n$ such that 
$H^n(X;\Q) \ne 0.$
We set
$$\pi_{\op{even}}(X)\otimes \Q:=\sum_{k \ge 1} \pi_{2k}(X) \otimes \Q, \, \pi_{\op{odd}}(X)\otimes \Q:=\sum_{k \ge 0} \pi_{2k+1}(X)\otimes \Q,$$
$$H^{\op{even}}(X;\Q):=\sum_{k \ge 0} H^{2k}(X;\Q), \, H^{\op{odd}}(X;\Q):=\sum_{k \ge 0} H^{2k+1}(X;\Q).$$
Then we have the following:
\begin{enumerate}[label=(\alph*)]
\item \label{a} $\sum_i\op{deg} x_i \le 2n-1, \sum_j \op{deg} y_j \le n$.
\item \label{b} $n=\sum_i  \op{deg} x_i-\sum_j (\op{deg} y_j-1)$.
\item $\chi^{\pi}(X):=\dim \left (\pi_{\op{even}}(X)\otimes \Q \right) -\dim
  \left (\pi_{\op{odd}}(X)\otimes \Q \right) \le 0$. 
\item $0 \le \chi(X)=\dim H^{\op{even}}(X;\Q)- \dim H^{\op{odd}}(X;\Q)$. 
\item $\chi(X) >0 \Longleftrightarrow \chi^{\pi}(X)=0$.
\item \label{poincare} Betti numbers $b_i =\dim H_i(X;\mathbb Q)$ of $X$ satisfy Poincar\'e duality \cite[\S 38
  Poincar\'e Duality]{FHT}.  In particular $b_n=1$ and $b_{n-1}=b_1=0$.
\item Betti numbers satisfy inequalities: $b_m \le {1 \over 2}{n
    \choose m}, m \ne 0,n$ (cf.  \cite[Corollary to Theorem 1]{Pa}) \footnote { This inequality implies that $\dim H^*(X;\mathbb Q) \le 2^{n-1}+1$, which is sharper than $\dim H^*(X;\mathbb Q) \le 2^n$ (\cite[Theorem 2.75]{FOT}).}
\end{enumerate}
Moreover,  
the Hilali conjecture 
\cite{Hil} (also see \cite{HM, HM2}), which is still open, suggests
that:
\begin{equation}\label{hilali}
\op{dim} \left (\pi_*(X) \otimes \mathbb Q \right ) \le \op{dim} H^*(X;\mathbb Q).
\end{equation}

The present paper, instead of (\ref{hilali}), shows different types of inequalities between the ranks of homotopy and
cohomology groups of rationally elliptic spaces (cf. 
\cite{Yo}). They are stated in terms of the 
\emph{cohomological Poincar\'e polynomial} and the \emph{homotopical Poincar\'e polynomial}. 
 For a simply connected rationally elliptic space $X$ we put
$$P_X(t) := \sum_{k \ge 0} \op{dim} H^k(X;\mathbb Q) t^k \quad \text{and} \quad P^{\pi}_X(t) := \sum_{k \ge 2} \op{dim} (\pi_k(X)\otimes \mathbb Q) t^k.$$
In \cite{Yo} the second named author showed that there exists a positive integer $n_0$
such that for all $n>n_0$ one has $P^{\pi}_{X^n}(1)<P_{X^n}(1)$. Here $X^n= \underbrace{X \times \cdots \times X}_n$ is the Cartesian product of $n$ copies of $X$. Below 
 we show the following (announced in \cite{Yo2}):
\begin{thm}\label{semi-global} Let $X$ be a
simply connected rationally elliptic space. For any positive real number
  $\varepsilon$ there exists
 a positive integer $n(\varepsilon)$ such that for all 
 $n \ge
 n(\varepsilon)$ and all 
 $t \ge \varepsilon$
\begin{equation}\label{var}
P^{\pi}_{X^n}(t) < P_{X^n}(t). 
\end{equation}
\end{thm}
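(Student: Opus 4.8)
The plan is to reduce the statement to a comparison of two explicit polynomials in $t$ and then to an elementary growth estimate. Note first that $X^n$ is again simply connected and rationally elliptic, and that by the K\"unneth formula with $\Q$-coefficients together with the isomorphism $\pi_k(X\times Y)\otimes\Q \cong (\pi_k(X)\otimes\Q)\oplus(\pi_k(Y)\otimes\Q)$ one has
\[
P_{X^n}(t)=\big(P_X(t)\big)^{\,n},\qquad P^{\pi}_{X^n}(t)=n\,P^{\pi}_X(t).
\]
Thus \eqref{var} is equivalent to $n\,P^{\pi}_X(t)<\big(P_X(t)\big)^{n}$ for all $t\ge\varepsilon$ and all $n\ge n(\varepsilon)$. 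If $H^*(X;\Q)=\Q$ then $\pi_*(X)\otimes\Q=0$, so $P^{\pi}_X\equiv0$ and $P_X\equiv1$ and the inequality reads $0<1$; hence we may assume $X$ is not rationally trivial, in which case its formal dimension $d$ satisfies $d\ge2$.

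Next I would extract from the Friedlander--Halperin restrictions the two facts I need about $P_X$ and $P^{\pi}_X$. From Poincar\'e duality \ref{poincare} we have $b_0=b_d=1$, so, all Betti numbers being nonnegative,
\[
P_X(t)=1+b_2t^2+\cdots+t^d\ \ge\ 1+t^d\qquad(t\ge0).
\]
From \ref{a}, every basis element $x_i$ of $\pi_{\op{odd}}(X)\otimes\Q$ satisfies $\op{deg}x_i\le\sum_i\op{deg}x_i\le2d-1$, and every $y_j$ satisfies $\op{deg}y_j\le\sum_j\op{deg}y_j\le d\le 2d-1$; hence $P^{\pi}_X(t)$ is a polynomial of degree at most $2d-1$ with nonnegative coefficients whose sum is $N:=\op{dim}\big(\pi_*(X)\otimes\Q\big)$. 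Consequently
\[
P^{\pi}_X(t)\ \le\ N\,\max\big(1,t^{\,2d-1}\big)\ \le\ N\big(1+t^{\,2d-1}\big)\qquad(t\ge0).
\]

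The remaining ingredient is the elementary inequality $\big(1+t^d\big)^2\ge1+t^{\,2d-1}$ for all $t\ge0$, which holds because $\big(1+t^d\big)^2-\big(1+t^{\,2d-1}\big)=t^d\big(t^d-t^{d-1}+2\big)\ge0$ (for $t\ge1$ one has $t^d\ge t^{d-1}$, and for $0\le t\le1$ one has $t^{d-1}\le1$). Combining the last three displays gives the uniform domination $P^{\pi}_X(t)\le N\big(P_X(t)\big)^2$ for all $t\ge0$. Now fix $\varepsilon>0$: since $P_X$ has nonnegative coefficients and is nonconstant, it is strictly increasing on $[0,\infty)$, so $q:=P_X(\varepsilon)>P_X(0)=1$ and $P_X(t)\ge q$ for $t\ge\varepsilon$. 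Hence, for $t\ge\varepsilon$ and $n\ge 3$,
\[
n\,P^{\pi}_X(t)\ \le\ nN\,\big(P_X(t)\big)^2\ =\ \frac{nN}{\big(P_X(t)\big)^{\,n-2}}\,\big(P_X(t)\big)^{n}\ \le\ \frac{nN}{q^{\,n-2}}\,\big(P_X(t)\big)^{n}.
\]
Since $q>1$ we have $nN\,q^{-(n-2)}\to0$ as $n\to\infty$, so choosing $n(\varepsilon)$ with $nN\,q^{-(n-2)}<1$ for all $n\ge n(\varepsilon)$ yields $n\,P^{\pi}_X(t)<\big(P_X(t)\big)^{n}$ for all such $n$ and all $t\ge\varepsilon$, which is \eqref{var}.

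The step I expect to be the real crux is the uniform-in-$t$ bound $P^{\pi}_X(t)\le N\big(P_X(t)\big)^2$: everything after it is a one-line $q^{\,n}$-versus-$n$ estimate, but this bound genuinely uses finiteness of the ranks of the rational homotopy groups and, via \ref{a}, that $\op{deg}P^{\pi}_X\le 2d-1<2d=\op{deg}\big(P_X\big)^2$, so that a single fixed power of $P_X$ already dominates $P^{\pi}_X$ on all of $[0,\infty)$. It is also worth tracking how $n(\varepsilon)$ depends on $\varepsilon$: as $\varepsilon\downarrow0$ one has $q=P_X(\varepsilon)\downarrow1$, forcing $n(\varepsilon)\to\infty$, which is precisely why $\varepsilon=0$ cannot be allowed in this argument.
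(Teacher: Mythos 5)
Your proof is correct, and it takes a genuinely different and cleaner route than the paper. Both arguments begin with the same reduction, via K\"unneth and additivity of rational homotopy, to showing $n\,P^{\pi}_X(t)<\bigl(P_X(t)\bigr)^n$ for all $t\ge\varepsilon$ and $n$ large. The paper then proves this as a special case of a general Lemma (\ref{general thm}) about \emph{arbitrary} polynomials $P,Q$ of the appropriate shape, with no assumed relation between $\deg P$ and $\deg Q$; the proof of that lemma is a two-regime argument --- a calculus estimate on $\partial_r\bigl(Q(s)^r - rP(s)\bigr)$ for large $s$, followed by a pointwise-limit-plus-compactness covering argument on the interval $[\varepsilon,s_0]$. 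You instead exploit two Friedlander--Halperin-type facts specific to rationally elliptic spaces --- Poincar\'e duality $b_d=1$ giving $P_X(t)\ge 1+t^d$, and the bound $\deg P^{\pi}_X\le 2d-1<2d$ --- to obtain the uniform domination $P^{\pi}_X(t)\le N\bigl(P_X(t)\bigr)^2$ on \emph{all} of $[0,\infty)$; the rest is then a single elementary $nq^{-(n-2)}\to 0$ estimate, with no compactness needed and with an explicit handle on $n(\varepsilon)$. What the paper's route buys is generality (the lemma is a self-contained statement about polynomials, reused verbatim for the mixed Hodge polynomial version in Proposition \ref{prop}, where no analog of your degree bound is available); what your route buys, in the Poincar\'e-polynomial case, is a shorter proof with a fully explicit and easily-tracked threshold, and it makes visibly clear that the whole theorem hinges on the inequality $\deg P^{\pi}_X < 2\deg P_X$ from the Friedlander--Halperin constraints.
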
 
\begin{rem}\label{remark-0} Note that, since $X$ is simply connected, $P_X(t)=1$
implies that $X$ is rationally homotopy equivalent to a point
(cf. \cite[Theorem 8.6]{FHT}), and
hence $P_X^{\pi}=0$. In particular, the inequality (\ref{var}) is satisfied 
with $n(\varepsilon)=1,  \forall \varepsilon>0$.  Therefore, in Theorem
\ref{semi-global}  we assume that $P_X(t)>1$. 
We also note that the formal dimension of a simply connected space is bigger than or equal to $2$.
\end{rem}

Theorem \ref{semi-global} suggests the following invariant of a rationally elliptic
homotopy type:
\begin{defn}\label{threshold} \emph{The stabilization threshold} is the smallest integers
  $n(\varepsilon)$
such that inequality (\ref{var}) takes place for 
all $n \ge n(\varepsilon)$.
\end{defn}
We denote the stabilization threshold
by 
  $\frak{pp}(X; \varepsilon)$, where $\frak{pp}$ stands for ``Poincar\'e polynomial". For 
  example, for $\varepsilon =1$ we have 
\begin{enumerate}
\item 
$\frak{pp}(S^{2n+1};1)=1$,
\item $\frak{pp}(S^{2n};1)=3$,
\item $\frak{pp}(\mathbb CP^1,1)=3$ and $\frak{pp}(\mathbb CP^n,1)=2$ if $n \ge 2$. 
\end{enumerate}
In terms of this invariant, the inequality of 
Theorem
\ref{semi-global} 
implies the following:
\begin{cor}\label{mainineq}
 For any $\varepsilon>0$ and $r \ge \frak{pp}(X; \varepsilon)$ 
we have 
\begin{equation*}
 r \left (\sum_{i=2}^n \dim \left (\pi_i(X)\otimes \Q \right ) \varepsilon^i \right) < \left (1+ \sum_{i=2}^l \dim H^i(X,\Q)\varepsilon^i \right)^r
\end{equation*}
where $n$ (resp. $l$) is the degree of homotopical
(resp. cohomological) Poincare polynomial.
\end{cor}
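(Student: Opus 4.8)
The plan is to derive Corollary \ref{mainineq} directly from Theorem \ref{semi-global} by evaluating the inequality (\ref{var}) at $t=\varepsilon$ and at $n=r$. First I would record how the Poincar\'e polynomials of a Cartesian power decompose. Since rational cohomology satisfies the K\"unneth formula, $H^*(X^r;\Q)\cong H^*(X;\Q)^{\otimes r}$, so $P_{X^r}(t)=P_X(t)^r$; evaluating at $t=\varepsilon$ gives $P_{X^r}(\varepsilon)=\left(1+\sum_{i=2}^l \dim H^i(X;\Q)\varepsilon^i\right)^r$, using that $b_0=1$ and $b_1=0$ by simple connectivity (Remark \ref{remark-0}), and that $l$ is the formal dimension. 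On the homotopy side, the rational homotopy groups of a product split as a direct sum, $\pi_k(X^r)\otimes\Q\cong(\pi_k(X)\otimes\Q)^{\oplus r}$, hence $P^{\pi}_{X^r}(t)=r\,P^{\pi}_X(t)$, which at $t=\varepsilon$ yields $r\sum_{i=2}^n \dim(\pi_i(X)\otimes\Q)\varepsilon^i$.

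Next I would invoke the definition of the stabilization threshold: by Definition \ref{threshold} and the notation $\frak{pp}(X;\varepsilon)$, for every $r\ge \frak{pp}(X;\varepsilon)$ the inequality (\ref{var}) holds with $n=r$ and for all $t\ge\varepsilon$; in particular it holds at the endpoint $t=\varepsilon$, giving $P^{\pi}_{X^r}(\varepsilon)<P_{X^r}(\varepsilon)$. Substituting the two product/sum formulas from the previous paragraph into this inequality produces exactly
\[
 r\left(\sum_{i=2}^n \dim\left(\pi_i(X)\otimes\Q\right)\varepsilon^i\right) < \left(1+\sum_{i=2}^l \dim H^i(X,\Q)\varepsilon^i\right)^r,
\]
which is the claimed statement. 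One should note the degenerate case $P_X(t)\equiv 1$: here $X$ is rationally trivial, $P^{\pi}_X=0$, and the inequality reads $0<1^r=1$, which holds trivially; so the interesting content is when $P_X(t)>1$, consistent with Remark \ref{remark-0}.

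There is essentially no obstacle here: the corollary is a transcription of Theorem \ref{semi-global} at a single value of $t$ using the multiplicativity of $P_X$ and the additivity of $P^{\pi}_X$ under Cartesian products. The only point that requires a line of care is the identification $P_{X^r}(t)=P_X(t)^r$ together with $P_X(t)=1+\sum_{i\ge2}b_i t^i$ (so that $b_1=0$ enters), and the dual identity $P^{\pi}_{X^r}(t)=rP^{\pi}_X(t)$, both of which are standard consequences of the K\"unneth theorem and of $\pi_*(X\times Y)\otimes\Q\cong(\pi_*(X)\otimes\Q)\oplus(\pi_*(Y)\otimes\Q)$. If one wishes, one can also remark that the inequality in the corollary is monotone-compatible with increasing $r$ in the sense provided by Theorem \ref{semi-global}, so it persists for all $r\ge\frak{pp}(X;\varepsilon)$ rather than merely at the threshold value.
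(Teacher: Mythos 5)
Your proof is correct and takes the same route the paper intends: the corollary is simply Theorem \ref{semi-global} specialized to $t=\varepsilon$ and $n=r$, rewritten via the already-established multiplicativity $P_{X^r}=P_X^r$ and additivity $P^{\pi}_{X^r}=rP^{\pi}_X$ (the paper offers no separate proof precisely because it regards the corollary as this immediate transcription). Your check of the degenerate case $P_X\equiv 1$ is a harmless extra, consistent with Remark \ref{remark-0}.
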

Note that inequality (\ref{hilali}) is a special case of
Corollary (\ref{mainineq}) for the spaces with stabilization threshold
$\frak{pp}(X;1)=1$, but not for the spaces with $\frak{pp}(X;1) \ge 2$. 
The argument used in the proof of Theorem \ref{semi-global}
 is an elementary calculus observation 
and based only on the difference in behavior of homotopy groups and 
cohomology groups in products.

Several results on stabilization threshold and specific values in some 
examples are presented 
in  
Sections \ref{mainsection}, \ref{bounds} and \ref{examples} respectively, 
but let us point out 
that we have the following result about the upper bound of the stabilization threshold $\frak{pp}(X;1) $:

\begin{thm}\label{formal-form} Let $X$ be a simply connected rationally elliptic
space of formal dimension $n \ge 3$. Then   
\begin{equation*}\label{up}
\frak{pp}(X;1) \le n.
\end{equation*}
\end{thm}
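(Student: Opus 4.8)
The plan is to produce, for any fixed simply connected rationally elliptic $X$ with $P_X(t) > 1$ and formal dimension $n \ge 3$, an explicit estimate showing that the inequality $P^{\pi}_{X^r}(1) < P_{X^r}(1)$ persists for all $r \ge n$. First I would record the multiplicativity of the two Poincar\'e polynomials under Cartesian products: since rational cohomology satisfies the K\"unneth formula, $P_{X^r}(1) = \bigl(P_X(1)\bigr)^r$, while since rational homotopy groups of a product are the direct sums of those of the factors, $P^{\pi}_{X^r}(1) = r\cdot P^{\pi}_X(1)$. Thus the claim $\frak{pp}(X;1) \le n$ is equivalent to the arithmetic inequality $r\cdot P^{\pi}_X(1) < \bigl(P_X(1)\bigr)^r$ holding for every integer $r \ge n$. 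Write $p := P^{\pi}_X(1) = \dim(\pi_*(X)\otimes\Q)$ and $q := P_X(1) = \dim H^*(X;\Q)$; both are positive integers, and $q \ge 2$ by Remark \ref{remark-0}. Because $q^r$ grows geometrically while $rp$ grows linearly, the map $r \mapsto q^r - rp$ is eventually positive and, once positive at $r = n$, stays positive for all larger $r$ (one checks $q^{r+1} - (r+1)p - (q^r - rp) = q^r(q-1) - p \ge q^r - p > 0$ for $r \ge n$ once $q^n > p$, using $q \ge 2$ and a bound $p \le q^n/2$ from below). So everything reduces to the single inequality $q^n > np$.

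The heart of the matter is therefore to bound $p$ above and $q$ below in terms of $n$ and then verify $q^n > np$. The key inputs are the Friedlander--Halperin restrictions \ref{a} and \ref{b} and the Betti number bound \ref{poincare}--(g). From \ref{a}, the odd-degree generators satisfy $\sum_i \deg x_i \le 2n-1$ with each $\deg x_i \ge 3$ (simple connectivity, and odd), while the even-degree generators satisfy $\sum_j \deg y_j \le n$ with each $\deg y_j \ge 2$; hence the number of odd generators is at most $\lfloor (2n-1)/3 \rfloor$ and the number of even generators is at most $\lfloor n/2 \rfloor$, giving $p \le \lfloor(2n-1)/3\rfloor + \lfloor n/2\rfloor \le 7n/6$ roughly. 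On the cohomology side, $q \ge 3$: a rationally elliptic space with $P_X(t) > 1$ has $b_0 = b_n = 1$ and $n \ge 3$ forces at least one more nonzero Betti number in an intermediate degree by Poincar\'e duality combined with \ref{b}, so $q \ge 3$, and in fact one can often do better. With $q \ge 3$ and $p \le 7n/6$, the inequality $q^n > np$ becomes $3^n > 7n^2/6$, which holds for all $n \ge 3$ (check $n = 3,4$ directly: $27 > 10.5$, $81 > 18.7$; then $3^n$ dwarfs $n^2$). I would carry this out cleanly, tightening the generator count and the lower bound on $q$ as needed to cover the small cases $n = 3, 4, 5$ by hand.

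The main obstacle I anticipate is establishing a sufficiently strong lower bound $q \ge 3$ (or better) uniformly for all rationally elliptic $X$ of formal dimension $n \ge 3$ with $P_X(t) > 1$ — ruling out the degenerate possibility $q = 2$, which would correspond to $X$ having the rational cohomology of an odd sphere $S^n$ with $n$ odd; but for odd-sphere-like spaces $p = 1$ and then $q^n = 2^n > n = np$ holds for $n \ge 1$ anyway, so even that borderline case is fine, which means the bound $q \ge 2$ already suffices once we observe $p$ is small when $q$ is small. More precisely, the real work is a careful case split according to the size of $q$: when $q$ is small the elliptic constraints force $p$ to be small too (few homotopy generators), and when $q$ is large the geometric growth $q^n$ trivially wins; organizing this trade-off between $p$ and $q$ against the exponent $n$ so that $q^n > np$ in every case is the crux. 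Finally, I would note that the passage from "$q^n > np$" to "$q^r > rp$ for all $r \ge n$" is the elementary induction sketched above, completing the proof that $\frak{pp}(X;1) \le n$.
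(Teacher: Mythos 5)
There is a fundamental gap at the very first reduction. You write that ``the claim $\frak{pp}(X;1)\le n$ is equivalent to the arithmetic inequality $r\cdot P^{\pi}_X(1) < (P_X(1))^r$ holding for every integer $r\ge n$.'' This is not what the threshold requires. By Definition~\ref{threshold}, $\frak{pp}(X;1)\le n$ means that for every $r\ge n$ the inequality of polynomials
\[
P^{\pi}_{X^r}(t) < P_{X^r}(t), \quad\text{i.e.,}\quad r\,P^{\pi}_X(t) < \bigl(P_X(t)\bigr)^r,
\]
holds for \emph{all} $t\ge 1$, not merely at the point $t=1$. Your entire estimate ($p\le 7n/6$ roughly, $q\ge 2$ or $3$, and then $q^n>np$) lives at the single value $t=1$ and says nothing about $t>1$. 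For $t>1$ the value $P^{\pi}_X(t)$ is no longer bounded by the number of homotopy generators; you must also control the \emph{degree} of $P^{\pi}_X$.

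The paper's proof of Proposition~\ref{<n} fills exactly this gap: it uses two facts from \cite[Theorem 32.15]{FHT}, namely the coefficient bound $P^{\pi}_X(1)=a_2+\cdots+a_\ell\le n$ (your input) \emph{and} the degree bound $\ell\le 2n-1$ (which you omit). Together these yield $P^{\pi}_X(t)\le n\,t^{2n-1}$ for all $t\ge1$, so
\[
\bigl(P_X(t)\bigr)^n - n\,P^{\pi}_X(t)\ \ge\ (t^n+1)^n - n^2 t^{2n}\ \ge\ t^{2n}\bigl(2^n-(n^2+n+1)\bigr)>0
\]
for $n\ge5$, with $n=3,4$ handled by the verified Hilali conjecture (Corollary~\ref{20}). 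Your inductive observation that $q^{r+1}-(r+1)p>q^r-rp$ once $q^n>p$ is fine as far as it goes, but it only addresses $t=1$; once the inequality is established for all $t\ge1$ at $r=n$, the step to $r\ge n$ is handled in the paper by the general argument at the start of the proof of Theorem~\ref{<3} (which again works uniformly in $t\ge1$), and your version would need a $t$-uniform replacement. Without a degree bound on $P^{\pi}_X$, the approach cannot be completed.
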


We also show that the Hilali conjecture implies sharp bound, independent
of dimension:
\begin{thm}\label{hil-form} 
If a simply connected rationally elliptic space $X$ satisfies the Hilali conjecture, then we have
$$\frak{pp}(X;1) \le 3.$$
\end{thm}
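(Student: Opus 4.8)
The plan is to reduce the inequality (\ref{var}) at $t=1$ to a purely numerical statement about the two integers $a := P^{\pi}_X(1) = \dim\bigl(\pi_*(X)\otimes\mathbb Q\bigr)$ and $b := P_X(1) = \dim H^*(X;\mathbb Q)$, and then feed in the Hilali inequality $a \le b$. To this end I would first record how the two Poincar\'e polynomials transform under Cartesian products: the K\"unneth formula gives $P_{X^n}(t) = \bigl(P_X(t)\bigr)^n$, while simple connectivity of $X$ yields $\pi_k(X^n)\otimes\mathbb Q \cong \bigl(\pi_k(X)\otimes\mathbb Q\bigr)^{\oplus n}$ for all $k\ge 2$, hence $P^{\pi}_{X^n}(t) = n\,P^{\pi}_X(t)$. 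Evaluating at $t=1$, inequality (\ref{var}) for the space $X^n$ reads exactly $n\,a < b^n$, so proving $\frak{pp}(X;1)\le 3$ amounts to showing that $n\,a < b^n$ holds for every $n \ge 3$.

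Next I would dispose of the trivial case $P_X(t)=1$ using Remark \ref{remark-0} (which there gives $\frak{pp}(X;1)=1$), and otherwise note that $b\ge 2$: indeed, Poincar\'e duality (property \ref{poincare}) together with connectedness gives $b_0=b_n=1$, and the formal dimension of a simply connected space is $n\ge 2$, so $b\ge b_0+b_n=2$. Now the Hilali conjecture for $X$ gives $a\le b$, so it suffices to prove $n\,b < b^n$, i.e. $n < b^{n-1}$, for all $n\ge 3$; since $b\ge 2$ this follows from the elementary inequality $2^{n-1} > n$ (valid for all $n\ge 3$ by an immediate induction, the base case being $4>3$). Chaining the estimates, $n\,a \le n\,b < b^n$ for every $n\ge 3$, which is exactly the claim.

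I do not expect a genuine obstacle: the argument is the same ``elementary calculus observation'' underlying Theorem \ref{semi-global}, now made quantitative. The one subtlety worth flagging is strictness --- it is the hypothesis $b\ge 2$ (rather than merely $b\ge 1$) that keeps the inequality strict, and the bound $3$ cannot be lowered, since for $a=b=2$, realized for instance by $X=S^2=\mathbb CP^1$ (which satisfies the Hilali conjecture), the inequality $n\,a<b^n$ fails at $n=2$. Hence Theorem \ref{hil-form} is sharp among spaces satisfying the Hilali conjecture.
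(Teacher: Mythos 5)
There is a genuine gap: you have proved the inequality only at the single point $t=1$, whereas the definition of $\frak{pp}(X;1)\le 3$ requires $P^{\pi}_{X^n}(t)<P_{X^n}(t)$ for \emph{every} $t\ge 1$ and every $n\ge 3$. Your key sentence ``proving $\frak{pp}(X;1)\le 3$ amounts to showing that $n\,a<b^n$ holds for every $n\ge 3$'' is not correct: $n\,a<b^n$ is just the specialization at $t=1$, and a strict polynomial inequality at $t=1$ does not propagate to $t>1$. In fact the passage from $t=1$ to general $t$ is precisely where the content of the theorem lies. If the polynomial $P^{\pi}_X$ were allowed to have degree much larger than the degree $n$ of $P_X$, the inequality $3P^{\pi}_X(t)<P_X(t)^3$ would obviously fail for $t$ large no matter what happens at $t=1$; what rescues it is the Friedlander--Halperin bound $\deg P^{\pi}_X\le 2n-1$ (this is inequality \eqref{2n-1} in the paper, quoted from \cite[Theorem 32.15]{FHT}), which you never invoke.

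Concretely, the paper's proof has two ingredients you are missing. First, it records the reduction step that once $\frak m P^{\pi}_X(t)<P_X(t)^{\frak m}$ holds for all $t\ge 1$ for some fixed $\frak m\ge 2$, the same inequality with $\frak m$ replaced by any $m\ge\frak m$ follows (this uses $P_X(t)\ge 1+t^n\ge 2\ge(\frak m+1)/\frak m$ for $t\ge 1$); so one only needs to establish the single polynomial inequality $3P^{\pi}_X(t)<P_X(t)^3$ for all $t\ge 1$. Second, to establish that inequality, it bounds $3P^{\pi}_X(t)\le 3\,t^{\ell}\,P^{\pi}_X(1)\le 3\,t^{2n-1}\,P^{\pi}_X(1)$ using $\ell\le 2n-1$, then applies the Hilali hypothesis $P^{\pi}_X(1)\le P_X(1)$, and finally compares with $(t^n+1)^3$ plus the contributions of the intermediate Betti numbers. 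Your $t=1$ computation ($n\,a\le n\,b<b^n$ via $b\ge 2$ and $2^{n-1}>n$) is correct as far as it goes and your sharpness example $\mathbb CP^1$ is fine, but the argument as written does not prove the theorem; you would need to add the uniform-in-$t$ estimate.
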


In particular, the question if $3$ is an unconditional bound of the threshold $\frak{pp}(X;1)$ is 
a weakening of the Hilali conjecture. Note (see Corollary \ref{20}) that the threshold $\frak{pp}(X;1)$ does not exceed $3$ if the formal dimension does not exceed $20$ since the Hilali conjecture is verified up in this range (see \cite{CatMil}).
The Hilali conjecture is also valid for formal spaces (see \cite{HM}), hence the stabilization threshold $\frak{pp}(X;1)$ does not exceed $3$ also for, e.g., the following spaces, which are formal:
\begin{itemize}
\item compact K\"ahler manifolds \cite{DGMS}, 
\item projective varieties with isolated normal singularities with high connectivity of links \cite{CC}, and
\item smooth quasi-projective manifolds with pure Hodge structure (by Dupont's ``purity implies formality" theorem \cite{Du}). 
\end{itemize}


Now, let $X$ be a quasi-projective algebraic variety.  
Both the homotopy and the cohomology groups carry mixed
Hodge structures (
\cite{De1}, \cite{De2}, \cite{Mo},
\cite{Hain 1},\cite{Hain 2}, \cite{Nav}), which are functorial for regular maps. 
An invariant of these mixed Hodge structures is given by the generating
functions for the dimensions of graded pieces of Hodge and weight
filtrations as follows:
\begin{equation}\label{homological}
MH_X(t,u,v) :=
\sum_{k,p,q} \dim \Bigl ( Gr_{F^{\bullet}}^{p} Gr^{W_{\bullet}}_{p+q} H^k (X;\mathbb C)  \Bigr) t^{k} u^p v^q, 
\end{equation}
where $(W_{\bullet}, F^{\bullet})$ is the mixed Hodge structure of the cohomology groups.
\begin{equation*}
MH^{\pi}_X(t,u,v) :=
\sum_{k,p,q} \dim  \Bigl (Gr_{\tilde F^{\bullet}}^{p} Gr^{\tilde W_{\bullet}}_{p+q} ((\pi_k(X)\otimes \mathbb C)^{\vee})\Bigr ) t^ku^p v^q,
\end{equation*}
where $(\widetilde W_{\bullet}, \widetilde F^{\bullet})$ is the mixed Hodge structure of the dual of homotopy groups.
They will be called respectively the \emph{cohomological mixed Hodge polynomial} and the \emph{homotopical mixed Hodge polynomial} of $X$.
A 
refinement of 
Theorem \ref{semi-global} (announced in \cite{Yo2}) for algebraic varieties
is as follows:
\begin{thm}\label{mixedcase}
 Let $\varepsilon$ and $r$ be positive real numbers such that $\varepsilon <r$ and let $\mathscr C_{\varepsilon,r}:=[\varepsilon, r] \times [\varepsilon,r] \times [\varepsilon,r] \subset (\mathbb R_{\ge 0})^3$ be the cube of size $r - \varepsilon$.
Let $X$ be a rationally elliptic quasi-projective variety. Then there exists a positive integer $n_{\varepsilon, r}$ such that for all $n \ge n_{\varepsilon, r}$
the following strict inequality holds:
$$MH^{\pi}_{X^n}(t,u,v) <  MH_{X^n}(t,u,v)$$
for $\forall (t,u,v) \in \mathscr C_{\varepsilon,r}.$ 
\end{thm}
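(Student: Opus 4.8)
The plan is to mimic the proof of Theorem \ref{semi-global}, working coordinate-by-coordinate in the enlarged variable set $(t,u,v)$ and exploiting the multiplicativity of mixed Hodge polynomials under products. First I would record the behavior of the two polynomials under Cartesian products. Since the cohomology of a product carries the tensor-product mixed Hodge structure (K\"unneth formula, compatible with both filtrations), one has
\begin{equation*}
MH_{X^n}(t,u,v) = \bigl(MH_X(t,u,v)\bigr)^n,
\end{equation*}
while the rational homotopy of a product is the direct sum of the rational homotopies of the factors, with the mixed Hodge structure on the dual being the direct sum, so
\begin{equation*}
MH^{\pi}_{X^n}(t,u,v) = n\cdot MH^{\pi}_X(t,u,v).
\end{equation*}
Thus the desired inequality becomes $n\cdot MH^{\pi}_X(t,u,v) < \bigl(MH_X(t,u,v)\bigr)^n$, exactly the same shape of inequality as in the topological case, now with $MH_X$, $MH^{\pi}_X$ real-valued on the compact cube $\mathscr C_{\varepsilon,r}$.

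Next I would show that $MH_X(t,u,v) > 1$ everywhere on $\mathscr C_{\varepsilon,r}$. By Remark \ref{remark-0}, if $P_X(t)=1$ then $X$ is rationally a point and both sides vanish in the appropriate sense; so we may assume $X$ is not rationally trivial, whence $H^k(X;\mathbb Q)\neq 0$ for some $k\geq 2$, and the corresponding graded piece of the mixed Hodge structure is nonzero. Setting $t=1$ we get $MH_X(1,u,v)\geq 1 + (\text{a nonempty sum of monomials } u^p v^q \text{ with } p,q\geq 0)$, and since $u,v\geq\varepsilon>0$ on the cube this is bounded below by $1+\delta$ for some $\delta = \delta(X,\varepsilon) > 0$; the same bound holds with $t\geq\varepsilon$ by monotonicity of the missing $t^k$-factor only when $t\geq 1$, so more carefully one takes the minimum of the continuous positive function $MH_X(t,u,v)-1$... wait, $MH_X$ might not exceed $1$ for small $t$. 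The cleaner route: $MH_X(t,u,v)$ is a polynomial with nonnegative coefficients, the constant term is exactly $\dim H^0 = 1$, and at least one higher monomial $c\,t^k u^p v^q$ with $c\geq 1$, $k\geq 2$ is present; hence on the compact set $\mathscr C_{\varepsilon,r}$ we have $MH_X(t,u,v)\geq 1 + \varepsilon^{k+p+q} =: 1+\eta$ with $\eta = \eta(X,\varepsilon,r)>0$. Let $M := \max_{\mathscr C_{\varepsilon,r}} MH^{\pi}_X(t,u,v) < \infty$, and $N := (1+\eta)$. Then for every $(t,u,v)\in\mathscr C_{\varepsilon,r}$,
\begin{equation*}
\bigl(MH_X(t,u,v)\bigr)^n \geq N^n = (1+\eta)^n,
\end{equation*}
which grows exponentially in $n$, while $n\cdot MH^{\pi}_X(t,u,v)\leq nM$ grows only linearly. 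Since $(1+\eta)^n/n\to\infty$, there is $n_{\varepsilon,r}$ with $(1+\eta)^n > nM$ for all $n\geq n_{\varepsilon,r}$, giving the strict inequality uniformly on the cube.

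I expect the main obstacle to be purely bookkeeping rather than conceptual: verifying cleanly that the two multiplicativity formulas hold at the level of the \emph{bigraded} dimensions (not merely the underlying vector spaces), i.e. that the K\"unneth isomorphism in cohomology is a morphism of mixed Hodge structures — this is standard (Deligne) but should be cited — and that the mixed Hodge structure on $\pi_*(X^n)\otimes\mathbb C$ (after Hain, Morgan) is the direct sum of those on the factors, compatibly with passing to duals. A secondary subtlety, already flagged above, is ensuring the lower bound $MH_X > 1 + \eta$ is \emph{uniform} over the cube; this is automatic because $\mathscr C_{\varepsilon,r}$ is compact, $MH_X$ is continuous, and one has a strictly positive lower bound at the single monomial level, so the extreme value theorem closes it. Once these two ingredients are in place, the argument is the same ``exponential beats linear'' calculus observation used for Theorem \ref{semi-global}, now applied pointwise on $\mathscr C_{\varepsilon,r}$ and made uniform by compactness.
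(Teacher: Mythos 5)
Your proof is correct, but it takes a somewhat more direct route than the paper. The paper first establishes a local proposition (Proposition \ref{prop}): for each fixed point $(s,a,b)$ in $(\mathbb R_{>0})^3$ one shows $MH_X(s,a,b)>1$ (using that a nonzero coefficient exists unless $X$ is rationally trivial), deduces that $n\,MH^{\pi}_X(s,a,b)/\bigl(MH_X(s,a,b)\bigr)^n\to 0$, and then upgrades this to an inequality in a small neighborhood of $(s,a,b)$ via the openness of a strict inequality; the theorem is then obtained by extracting a finite subcover of the compact cube $\mathscr C_{\varepsilon,r}$ and taking the maximum of the associated integers. You instead bypass the local statement entirely: from the polynomial structure (constant term $1$, some monomial $c\,t^ku^pv^q$ with $c\geq 1$ and all exponents nonnegative) you get the \emph{uniform} lower bound $MH_X(t,u,v)\geq 1+\eta$ on the whole cube at once, couple it with the uniform upper bound $M:=\max_{\mathscr C_{\varepsilon,r}}MH^{\pi}_X$ furnished by continuity and compactness, and finish by comparing the exponential $(1+\eta)^n$ to the linear $nM$. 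Both arguments lean on compactness, but yours uses it once to manufacture two uniform constants, whereas the paper uses it to patch together pointwise neighborhoods; your version is shorter and self-contained, while the paper's local proposition has some independent interest and mirrors the interval-covering structure already used in the proof of Lemma \ref{general thm}. One small remark: as you noticed midstream, it is cleaner to argue directly from $t,u,v\geq\varepsilon$ and nonnegative exponents that $t^ku^pv^q\geq\varepsilon^{k+p+q}$ (valid whether $\varepsilon\le 1$ or $\varepsilon>1$), rather than first specializing $t=1$; the final version of your bound is correct.
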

Similarly to $\frak{pp}(X;1)$, we can consider the smallest integer $n_0$ such that for $\forall n \ge n_0$ the following holds
$$MH^{\pi}_{X^n}(t,u,v) < MH_{X^n}(t,u,v)  \quad \forall t \ge a, \forall u \ge b, \forall v \ge c.$$
We denote it by $\frak{mhp}(X; a,b,c)$, where $\frak{mhp}$ stands for ``mixed Hodge polynomial".

Actual calculations of homotopy and
cohomology groups of rationally elliptic quasi-projective varieties
are rather sparse with the main focus being on low dimensional cases
(e.g., see \cite{amoros}, \cite{BMM} and \cite{Herr} where such rationally elliptic spaces are identified)
and even less is known about their 
mixed Hodge theory refinements.
Therefore, besides inequalities, we include several examples, in
particular toric varieties and 
arrangements of linear subspaces and calculate the stabilization thresholds for them.

It would be interesting to find non-trivial\footnote{A trivial example is $X \times C$ where $X$ is any rationally elliptic smooth or singular variety and $C$ is a rational cuspidal curve (which is homeomorphic to $S^2$).} examples of singular algebraic varieties which are rationally elliptic and study for their mixed homotopy and homology polynomials and 
their stabilization thresholds.

In \S 2 we prove Theorems \ref{semi-global} and 
\ref{mixedcase} and several results on stabilization thresholds. Theorems \ref{formal-form} and \ref{hil-form} are proven in \S 3. In the final \S 4 we give explicit calculations of the homotopical and cohomological mixed Hodge polynomials of several compact and open manifolds, including some toric varieties and complement to arrangements of linear subspaces in affine space. In this section we also introduce and discuss homotopical $E$-function which is an analog of classical cohomological $E$-function.

\section{Proofs of the main results}\label{mainsection}

The isomorphisms $\pi_i(X\times Y)=\pi_i(X) \oplus \pi_i(Y)$ and the
K\"unneth formula $H^n(X \times Y, \mathbb Q)= \sum_{i+j=n} H^i(X; \mathbb Q) \otimes H^j(Y; \mathbb Q)$ imply that the homotopical Poincar\'e polynomial $P^{\pi}_X(t)$ and the cohomological Poincar\'e polynomial $P_X(t)$ are respectively additive and multiplicative, i.e., 
$$P^{\pi}_{X \times Y} (t) =P^{\pi}_X(t) + P^{\pi}_Y(t) \quad \text{and} \quad  P_{X \times Y} (t) =P_X(t) \times  P_Y(t),$$
which imply that 
Theorem \ref{semi-global} is an immediate 
consequence of the following:
\begin{lem}\label{general thm} Let $\varepsilon$ be a positive real number. Let $P(x)$ and $Q(x)$ be two polynomials of the following types:
$$P(x) = \sum_{k=2}^p a_kx^k, \,  a_k \ge 0, \quad \quad Q(x) = 1 + \sum_{k=2}^q b_kx^k, \quad b_k \ge 0, \, b_q \not = 0 .$$
Then there exists a positive integer $n_0$ such that for $\forall n \ge n_0$
\begin{equation}\label{keyineq}
n P(x) < Q(x)^n \, \, (\forall x \ge \varepsilon).
\end{equation}
\end{lem}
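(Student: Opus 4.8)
The plan is to reduce the inequality $nP(x) < Q(x)^n$ for all $x \ge \varepsilon$ to two separate regimes — a "large $x$" regime and a "bounded $x$" regime — and handle each by an elementary growth estimate. The key structural facts we will exploit are: (i) $P$ has no constant term (indeed starts in degree $2$), so $P(x)/x^2$ is bounded on $[\varepsilon,\infty)$ near... more precisely $P(x) \le C x^p$ for large $x$; and (ii) $Q$ has constant term $1$ and a nonzero leading coefficient $b_q$ with $q \ge 2$, so $Q(x) \ge 1 + b_q x^q \ge 1$ everywhere on $x \ge 0$, and moreover $Q(x) > 1$ for $x > 0$. The contrast that drives the proof is that $Q(x)^n$ grows geometrically in $n$ while $nP(x)$ grows only linearly in $n$, uniformly once we control the base $Q(x)$ away from $1$.

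\medskip

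\textbf{Step 1 (bounded regime).} Fix any $R > \varepsilon$ to be chosen, and consider $x \in [\varepsilon, R]$. On this compact interval set $m := \min_{x \in [\varepsilon,R]} Q(x)$ and $M := \max_{x \in [\varepsilon,R]} P(x)$. Since $Q(x) = 1 + \sum_{k\ge 2} b_k x^k$ with all $b_k \ge 0$ and $b_q \neq 0$, we have $Q(x) \ge 1 + b_q \varepsilon^q =: \lambda > 1$ for all $x \ge \varepsilon$; in particular $m \ge \lambda > 1$. Then $Q(x)^n \ge \lambda^n$, while $nP(x) \le nM$. Since $\lambda^n / n \to \infty$, there is $n_1$ with $\lambda^n > nM$ for all $n \ge n_1$, giving $nP(x) < Q(x)^n$ on $[\varepsilon,R]$ for $n \ge n_1$.

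\medskip

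\textbf{Step 2 (large regime).} For $x \ge R$ we compare leading behavior directly. We have $P(x) \le A x^p$ for $x \ge 1$, where $A = \sum_{k=2}^p a_k$, and $Q(x) \ge b_q x^q + 1 \ge b_q x^q$ for $x \ge 0$. Hence for $x \ge R \ge 1$,
\[
\frac{Q(x)^n}{nP(x)} \ge \frac{(b_q x^q)^n}{n A x^p} = \frac{b_q^n}{nA}\, x^{nq - p}.
\]
Choose $n_2$ large enough that $nq - p \ge 1$ and $b_q^n R^{nq-p} \ge nA$ — possible because once $n \ge n_2^{(0)}$ with $n_2^{(0)} q > p$, the right side $b_q^n R^{nq-p}/(nA)$ (with $R$ fixed $\ge \max(1, b_q^{-1})$, say) tends to infinity in $n$. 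Then for all $x \ge R$ and $n \ge n_2$ the ratio is $\ge 1$, in fact $>1$, so $nP(x) < Q(x)^n$. Taking $n_0 = \max(n_1,n_2)$ finishes the proof.

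\medskip

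\textbf{The main obstacle} is bookkeeping the interplay between the two regimes so that a single threshold $R$ works for both steps, and handling the degenerate sub-case $P \equiv 0$ (then $nP(x) = 0 < 1 \le Q(x)^n$ trivially, so $n_0 = 1$), as well as the edge possibility that $Q$ itself could be small — which it cannot, precisely because its constant term is $1$ and all other coefficients are nonnegative, so $Q(x) \ge 1$ on $x \ge 0$ and $Q(x) \ge \lambda > 1$ on $x \ge \varepsilon$. One should also note $q \ge 2$ is not actually needed beyond $q \ge 1$; the hypothesis $b_q \neq 0$ is what matters, ensuring genuine growth. A clean way to present this is to phrase Step 1 via the uniform lower bound $Q(x) \ge \lambda$ and Step 2 via the single inequality $Q(x)^n \ge (1 + b_q x^q)^n \ge \binom{n}{1} b_q x^q$ combined with higher binomial terms if a sharper bound is wanted; but the two-regime split above is the most transparent route.
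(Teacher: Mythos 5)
Your two-regime split is sound and your Step 1 is actually cleaner than the paper's: the paper proves the bounded-interval estimate via a pointwise limit $nP(\xi)/Q(\xi)^n \to 0$ followed by a compactness/covering argument, whereas you observe directly that $Q(x) \ge 1 + b_q\varepsilon^q =: \lambda > 1$ uniformly on $[\varepsilon,\infty)$, so $Q(x)^n \ge \lambda^n$ beats $n\cdot\max_{[\varepsilon,R]}P$ outright. That is a genuinely simpler route than the paper's (which also has a slight mismatch: its derivative argument works for $x > \max(s_0,e)$ while the covering is only of $[\varepsilon,s_0]$, silently requiring $s_0\ge e$). For the unbounded regime the paper fixes $N_0$ with $\deg Q^{N_0} > \deg N_0P$ and then shows $r\mapsto Q(s)^r - rP(s)$ is increasing via $\partial_r = (\log Q(s))Q(s)^r - P(s) > 0$ once $Q(s)>e$; your leading-term comparison is more elementary but accomplishes the same thing.

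There is, however, a small but real slip in your Step 2. You drop the constant term, estimate $Q(x)\ge b_qx^q$, and then claim that
$$\frac{b_q^n R^{nq-p}}{nA} = \frac{(b_qR^q)^n}{R^p\, nA} \longrightarrow \infty$$
for $R \ge \max(1,b_q^{-1})$. But that choice only guarantees $b_qR^q \ge 1$, not $b_qR^q > 1$; in the boundary case (e.g.\ $b_q=1$, $R=1$, which is permissible when $\varepsilon<1$) the numerator $(b_qR^q)^n$ is identically $1$ and the ratio tends to $0$, so the required $n_2$ does not exist. The fix is a one-liner: either pick $R$ so that $b_qR^q\ge 2$, say $R=\max\bigl(\varepsilon+1,\,(2/b_q)^{1/q}\bigr)$, or keep the $+1$ and bound
$$Q(x)^n \ge (1+b_qx^q)^n \ge \binom{n}{s}(b_qx^q)^s$$
for some fixed $s\ge 2$ with $sq>p$, so that $\binom{n}{s}/n\to\infty$ does the work regardless of whether $b_qR^q$ exceeds $1$ (this is essentially the device the paper uses in its remark following the lemma). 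With that adjustment the proposal is a complete and valid proof.
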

\begin{rem}
For our purpose it is sufficient to consider $b_q=1$,  but we do not assume it.
\end{rem}
\begin{proof} [{\bf Proof} of Lemma \ref{general thm} ]
 Select a positive integer $N_0$ such that $\op{deg} \left( Q(x)^{N_0}
 \right) > \op{deg} \left ( N_0P(x) \right) $ and take $s_0>1, s_0 \in
 \R$ such that 
$Q(x)^{N_0} >N_0P(x)$ for any $x>s_0$.  Then $R(s,r)$ defined by $R(s,r):=Q(s)^r-rP(s)$ we have the following for
all  $r \ge N_0$ and all $s > s_0$:
$$\frac{\partial R(s, r)}{\partial r}= \log Q(s) \cdot Q(s)^r-P(s)> \log Q(s) \cdot N_0P(s)-P(s),$$
which is
positive for all $s>\op{max}(s_0,e)$ since $\log Q(s)>1$, because $Q(s) \ge Q(e) = 1+ \sum_{k=2}^q b_ke^k >e$ since $b_q \not = 0$. Thus for all $s>\op{max}(s_0,e)$ the function 
$R(s,r)$ is increasing with respect to $r$ and $R(s,N_0) = Q(s)^{N_0} - N_0P(s) >0$, thus, in particular  
$R(x,n)=Q(x)^n -nP(x) >0$ for all $x>\op{max}(s_0, e)$ and for all $n \ge N_0$. 
Therefore we have that 
$$ \text{$nP(x) < Q(x)^n$ for
all $x>\op{max}(s_0, e)$ and for all $n \ge N_0$. 
}$$ 
Now, we have the following 
$$\lim_{n \rightarrow \infty} {{nP(\xi)}\over {Q(\xi)^n}}= P(\xi) \lim_{n \rightarrow \infty} {{n}\over {Q(\xi)^n}}=0$$  
for any fixed
$\xi \in [\varepsilon,s_0]$, since $Q(\xi)>1$ for $\xi>0$.  
Therefore, we see that there exists an integer $n(\xi)$ such that for all $n>n(\xi)$
one has $nP(\xi) < Q(\xi)^n$. Having such an integer $n(\xi)$ for each $\xi$, we  can find 
$\delta_{\xi}$ such that for $\vert x-\xi \vert <\delta_{\xi}$ 
and $n>n(\xi)$ one has 
$nP(\xi) <Q(\xi)^n$. Selecting a finite set of 
$\xi_i$ such that the intervals of length $\delta_{\xi_i}$ centered at
$\xi_i$ cover
$[\varepsilon,s_0]$, we see that for 
$N \ge \op{max}\{n(\xi_i),N_0 \}$
one has (\ref{keyineq})  for all $x \ge \varepsilon$. 
\end{proof}
\begin{rem} Let $\frak n(\varepsilon,P,Q)$ be the smallest integer $n_0$
  satisfying conditions of Lemma \ref{general thm}. We can find an upper bound $\frak u$ of the threshold $\frak n(\varepsilon,P,Q)$, i.e., $\frak n(\varepsilon,P,Q) \le \frak u$,
 as follows.

(A) First we consider the case when $0 < \varepsilon \le 1$: 
Let $m$ be the number of the monomials $a_{l_i}x^{l_i} (1 \le i \le m)$ in $P(x)$ 
and $b_qx^q$ be the top degree term of $Q(x)$. Let $\frak u_i$ be an upper bound of the stabilization threshold $\frak n(\varepsilon,ma_{l_i}x^{l_i},1+b_qx^q)$, i.e., $\frak n(\varepsilon,ma_{l_i}x^{l_i},1+b_qx^q) \le \frak u_i$, and let $\frak u:=\op{max}\{\frak u_1, \cdots, \frak u_m\}$.
Then for $\forall n \ge \frak u$
we have for all $i$:
\begin{equation*}\label{twotermineq}
 n(ma_{l_i}x^{l_i})<(1+b_qx^q)^n \quad \forall x \ge \varepsilon
\end{equation*}
and hence
\begin{equation*}nP(x) = {n \over m}\sum_{i=1}^m
  ma_{l_i}x^{l_i} < {1 \over m}\sum_1^m(1+b_qx^q)^n < \bigl(1 + b_kx^k + \cdots b_qx^q \bigr)^n
  =Q(x)^n.
\end{equation*}
Therefore we get that $\frak n(\varepsilon,P,Q) \le \frak u$.

Now, each upper bound $\frak u_i$ of the threshold $\frak n(\varepsilon,ma_{l_i}x^{l_i},1+b_qx^q)$ is obtained as follows, by considering the inequality $n(ma_ix^{l_i})<(1+b_qx^q)^n$ for (a) $x >1$ and (b) $\varepsilon \le x \le 1$ :

\noindent 
(a) $x >1$:
\begin{enumerate}
\item Find an integer $s$ such that $sq > \ell_i$ and
    $s \ge 2$ (condition used in the next step),
\item Find an integer $\widehat n_0$ (depending on $a_{l_i},b_q,l_i,s$)
such that $\displaystyle  \frac{ma_{l_i}}{b_q^s} \le \frac{1}{\widehat n_0}
\binom{\widehat n_0}{s}$ for $\widehat n_0 \ge s$, which implies that $\displaystyle  \frac{ma_{l_i}}{b_q^s} \le \frac{1}{n}
\binom{n}{s}$ for $\forall n \ge \widehat n_0 \ge s$. (If $s=1$, then 
$\frac{1}{n} \binom{n}{s} = 1$ for $\forall n$, in which case there might not exist such an integer $\widehat n_0$, depending on the integers $m, a_{l_i},b_q$. )
\end{enumerate}
Then, for $\forall n \ge \widehat n_0$:
\begin{equation*}
n(ma_{l_i}x^{l_i})\le {n \choose s}b_q^sx^{\ell_i} <{n \choose s}b_q^sx^{qs} ={n \choose s} (b_qx^q)^s <(1+b_qx^q)^n \quad \text{for $x > 1$}
\end{equation*}

\noindent
(b)  $\varepsilon \le x \le 1$:

First we observe that $x^{l_i} \le 1$ for $\varepsilon \le x \le 1$, hence it suffices to consider the inequality $n(ma_{l_i})<(1+b_qx^q)^n$, which implies that $n(ma_{l_i}x^{l_i})<(1+b_qx^q)^n$.

\begin{enumerate}
\item[(3)] Find a positive integer $\widetilde n_0$ which is larger than the largest of the roots 
of the following equation:
$$(ma_{l_i})y=(1+b_q\varepsilon^q)^y.$$
\end{enumerate}

In order to show the inequality $n(ma_{l_i})<(1+b_qx^q)^n$ for $\forall n \ge \widetilde n_0$ and for $x \in [\varepsilon, 1]$, for  a fixed $u$ we consider the line 
$z=e \log(u)y$, which as direct calculation readily shows, 
is tangent to the curve $z=u^y$ at the point $y_*(u)={1 \over {\log(u)}}$.
Any other line through the origin of $(z,y)$-plain either does not
intersect $z=u^y$ or intersects it at two points. Taking $u=1+b_qx^q$, 
we conclude that if 
$ma_{l_i}<e\log(1+b_qx^q)$, then $(ma_{l_i})y <(1+b_qx^q)^y$ for $\forall y$,  in particular, 
$n(ma_{l_i}) <(1+b_qx^q)^n$ for $\forall n$.
Otherwise, 
$n(ma_{l_i}) <(1+b_qx^q)^n$ is satisfied for $\forall n \ge
y_0(x)$ where $y_0(x)$ is the largest coordinate of intersection 
of the line  
$z=(ma_{l_i})y$ and exponential curve
$z=(1+b_qx^q)^y$. To get an upper bound on $y_0(x), x \in [\varepsilon,
1]$, note that the largest $y$-coordinate of the intersection of the line
$z=(ma_{l_i})y$ with the exponential curve $z=u^y$, is increasing when $u$ is getting
smaller 
and its minimal value is $(1+b_q\varepsilon^q)$,i.e. for $x=\varepsilon$. Hence the upper bound of $y_0(x)$ is 
the largest of the roots 
of the equation $(ma_{l_i})y=(1+b_q\varepsilon^q)^y.$ Therefore, we have that $n(ma_{l_i}) <(1+b_qx^q)^n$ for $\forall n \ge \widehat n_0$., i.e.,
$$n(ma_{l_i}x^{l_i})<(1+b_qx^q)^n \quad \text{for $\forall x \in [\varepsilon, 1]$.}$$
Finally, we let $\frak u_i = \op{max} \{\widehat n_0, \widetilde n_0\}$, then for $\forall n \ge \frak u_i$ we have
$$n(ma_{l_i}x^{l_i})<(1+b_qx^q)^n \quad \text{for $\forall x \ge \varepsilon$.}$$

(B) In the case when $\varepsilon >1$: We do the same thing as in (a), just by replacing $x >1$ by $x \ge \varepsilon$. Then we let $\frak u_i := \widehat n_0$.

\end{rem}

\begin{rem}\label{form-product} We have the following inequality for the stabilization thresholds: 
\begin{equation}\label{pp-product}
 \frak{pp}(X\times Y;\varepsilon) \le \op{max} \{\frak{pp}(X;\varepsilon),\frak{pp}(Y;\varepsilon) \}
\end{equation}
for a positive real number $\varepsilon$ such that $P_X(\varepsilon) \ge 2$ and $P_Y(\varepsilon) \ge 2$.
Indeed, we let $\frak{pp}(X;\varepsilon):=n_X$ and $\frak{pp}(Y;\varepsilon):=n_Y$ , then we have
$$nP^{\pi}_X(t) < P_X(t)^n \quad \forall n \ge n_X, \forall t \ge \varepsilon,$$
$$nP^{\pi}_Y(t) < P_Y(t)^n, \quad \forall n \ge n_Y, \forall t \ge \varepsilon.$$
Then for $\forall n \ge \op{max}\{n_X, n_Y \}$ and $\forall t \ge \varepsilon$ we have
\begin{equation}\label{equ-1}
n(P^{\pi}_X(t) + P^{\pi}_Y(t) ) < P_X(t)^n + P_Y(t)^n.
\end{equation}
Since $P_X(t)$ and $P_Y(t)$ are increasing functions and $P_X(\varepsilon) \ge 2$ and $P_Y(\varepsilon) \ge 2$, $P_X(t) \ge 2$ and $P_Y(t) \ge 2$ for $\forall t \ge \varepsilon$. Hence we have
\begin{equation}\label{equ-2}
P_X(t)^n + P_Y(t)^n \le P_X(t)^n \cdot P_Y(t)^n = \left ( P_X(t) \cdot P_Y(t)\right )^n.
\end{equation}
$P_X(t)^n + P_Y(t)^n \le P_X(t)^n \cdot P_Y(t)^n$ follows from that 
$$P_X(t)^n \cdot P_Y(t)^n - P_X(t)^n - P_Y(t)^n = \left (P_X(t)^n -1 \right) \left (P_Y(t)^n -1 \right) -1 \ge 0$$
because $P_X(t)^n -1 \ge 1 $ and $P_Y(t)^n -1 \ge 1$ for $\forall t \ge \varepsilon$. Therefore it follows from (\ref{equ-1}) and (\ref{equ-2}) that 
$nP^{\pi}_{X\times Y}(t) < P_{X \times Y}(t)^n$ for $\forall n \ge \op{max}\{n_X, n_Y \}$ and $\forall t \ge \varepsilon$. Therefore we get $\frak{pp}(X\times Y;\varepsilon) \le \op{max} \{\frak{pp}(X;\varepsilon),\frak{pp}(Y;\varepsilon) \}$. However, in general we have $\frak{pp}(X\times Y;\varepsilon) \not = \op{max} \{\frak{pp}(X;\varepsilon),\frak{pp}(Y;\varepsilon) \}$. For example, we can see that $\frak{pp}(S^{2n};1)=3$, but $\frak{pp}(S^{2n} \times S^{2n};1)=2$.
\end{rem}
Now we will turn to comparison of 
the homotopical and cohomological mixed Hodge
polynomials.

In fact the cohomological mixed Hodge polynomial is also multiplicative just like the (cohomological) Poincar\'e polynomial $P_X(t)$
\begin{equation*}\label{mh-multi}
MH_{X \times Y}(t,u,v) =MH_X(t,u,v)  \times MH_Y(t,u,v)
\end{equation*}
which follows from the fact that the mixed Hodge structure is compatible with the tensor product (e.g., see \cite{PS}.)
On the other hand the homotopical mixed Hodge polynomial is additive just like the homotopical Poincar\'e polynomial $P^{\pi}_X(t)$

\begin{equation*}\label{mh-pi-additive}
MH^{\pi}_{X \times Y}(t,u,v) = MH^{\pi}_X(t,u,v) + MH^{\pi}_Y(t,u,v)
\end{equation*}
since $\pi_{*}(X \times Y) = \pi_{*}(X) \oplus \pi_{*}(Y)$ and the category of mixed Hodge structures is abelian and the direct sum of a mixed Hodge structure is also a mixed Hodge structure. 
In this paper the following special multiplicativity and additivity are sufficient:
\begin{equation}\label{mh-pi-multi}
MH_{X^n}(t,u,v) = \{MH_X(t,u,v)\}^n,
\end{equation}
\begin{equation}\label{mh-pi-additive}
MH^{\pi}_{X^n}(t,u,v) = nMH^{\pi}_X(t,u,v).
\end{equation}

In fact, in a similar way to that of Theorem \ref{general thm}, using multiplicativity and additivity relations (\ref{mh-pi-multi})  and (\ref{mh-pi-additive}), 
we can show the following proposition. Let $\mathbb R_{> 0}$ be the set of positive real numbers.
\begin{pro}\label{prop} Let $(s,a,b) \in (\mathbb R_{>0})^3$. Let $X$ be a rationally elliptic quasi-projective variety.
Then there exists a positive integer $n_{(s,a,b)}$ such that for 
$\forall n \ge n_{(s,a,b)}$
the following strict inequality holds
$$MH^{\pi}_{X^n}(t,u,v) <  MH_{X^n}(t,u,v)$$
for $|t-s| \ll 1, |u-a| \ll 1, |v-b| \ll 1$.
\end{pro}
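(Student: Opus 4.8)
The plan is to follow the strategy of Lemma \ref{general thm}, but to exploit that we now only need the inequality on a bounded neighbourhood of $(s,a,b)$ which sits strictly inside the positive octant; this shortens the argument substantially. Write $P(t,u,v) := MH^{\pi}_X(t,u,v)$ and $Q(t,u,v) := MH_X(t,u,v)$. By the relations (\ref{mh-pi-multi}) and (\ref{mh-pi-additive}), the desired inequality $MH^{\pi}_{X^n}(t,u,v) < MH_{X^n}(t,u,v)$ is exactly $n\,P(t,u,v) < Q(t,u,v)^n$, so it suffices to exhibit $\delta$ with $0 < \delta < \min(s,a,b)$ and an integer $n_{(s,a,b)}$ such that $n\,P < Q^n$ on the closed cube $\bar U := [s-\delta,s+\delta]\times[a-\delta,a+\delta]\times[b-\delta,b+\delta] \subset (\mathbb R_{>0})^3$ for all $n \ge n_{(s,a,b)}$.

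First I would dispose of the degenerate case $P \equiv 0$: this means $\pi_*(X)\otimes \mathbb Q = 0$, i.e.\ $X$ is rationally trivial, and then $MH^{\pi}_{X^n} = 0$ while $MH_{X^n} = Q^n \ge 1$ (the coefficient of $t^0u^0v^0$ in $Q$ is $\dim H^0(X;\mathbb C)=1$ and all other coefficients are non-negative), so the inequality holds for every $n\ge 1$ and any sufficiently small neighbourhood. So assume $P \not\equiv 0$. Since all coefficients of $P$ are non-negative and $s,a,b>0$, this gives $P(s,a,b)>0$. Likewise $Q = 1 + (\text{terms with non-negative coefficients})$, and $Q \not\equiv 1$: otherwise $H^{>0}(X;\mathbb Q) = 0$, which for the simply connected $X$ would force rational triviality (\cite[Theorem 8.6]{FHT}; cf.\ Remark \ref{remark-0}), contradicting $P\not\equiv 0$. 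Hence $Q(s,a,b) > 1$.

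Next I would invoke continuity: since $Q(s,a,b)>1$, choosing $\delta$ small enough makes $Q > 1$ on the compact set $\bar U \subset (\mathbb R_{>0})^3$, where then $Q$ attains a minimum $q_{\min} > 1$ and $P$ attains a finite maximum $p_{\max}\ge 0$. For every $(t,u,v) \in \bar U$ one gets
$$\frac{n\,P(t,u,v)}{Q(t,u,v)^n} \;\le\; \frac{n\,p_{\max}}{q_{\min}^{\,n}} \;\longrightarrow\; 0 \qquad (n \to \infty),$$
because $q_{\min}>1$. Taking $n_{(s,a,b)}$ so large that $n\,p_{\max} < q_{\min}^{\,n}$ for all $n \ge n_{(s,a,b)}$ then yields $n\,P(t,u,v) < Q(t,u,v)^n$ on $\bar U$ for all such $n$, which is the assertion.

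I do not expect a serious obstacle here. The only structural inputs are the compatibility of the mixed Hodge structure with tensor products and direct sums, giving (\ref{mh-pi-multi}) and (\ref{mh-pi-additive}), together with the fact (Remark \ref{remark-0}) that a simply connected, rationally non-trivial space has non-trivial positive-degree rational cohomology, so that $MH_X(s,a,b)>1$. Unlike in Lemma \ref{general thm}, where the variable ranges over the unbounded half-line and one must compare degrees and analyse the tangency of exponential curves with lines through the origin, here the domain is compact and bounded away from the coordinate hyperplanes, so the uniform bounds $q_{\min} > 1$ and $p_{\max} < \infty$ are immediate and the estimate $n\,p_{\max}/q_{\min}^{\,n}\to 0$ closes the argument at once. (As a byproduct, Theorem \ref{mixedcase} would follow by covering the compact cube $\mathscr C_{\varepsilon,r}$ by finitely many such neighbourhoods and taking the maximum of the corresponding $n_{(s,a,b)}$.)
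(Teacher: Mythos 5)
Your proposal is correct and follows essentially the same route as the paper: reduce via the multiplicativity/additivity relations to $nP < Q^n$, dispose of the rationally trivial case, observe $Q(s,a,b) > 1$ because $X$ has nontrivial rational cohomology in positive degree, and use $n/Q^n \to 0$. Your explicit compactness step (taking $q_{\min}>1$ and $p_{\max}<\infty$ on a small closed cube) is a slightly cleaner way to package what the paper phrases as ``openness of condition (\ref{opencond})'' together with a reference back to the covering argument in Lemma \ref{general thm}, and it makes visible why a single $n_{(s,a,b)}$ works uniformly on the whole neighbourhood rather than just pointwise.
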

\begin{proof} For the sake of completeness and/or the sake of the reader, we give a proof, which is similar to the proof of Lemma \ref{general thm}.
We set
$$MH^{\pi}_{X}(t,u,v) = \sum_{k\ge 2, p \ge 0, q \ge 0} a_{k,p,q} t^ku^pv^q, \quad MH_X(t,u,v) = 1 +\sum_{k\ge 1, p \ge 0, q \ge 0} b_{k,p,q}t^ku^pv^q.$$
If all the coefficients $b_{k,p,q}=0$, then $H^*(X;\mathbb Q)=\mathbb Q =H^{*}(pt;\mathbb Q)$, which implies (as in Remark \ref{remark-0}) that $X$ is rationally homotopy equivalent to the point, hence $\pi_*(X)={0}$. The above strict inequality automatically holds.
So we can assume that $b_{k_0,p_0,q_0} \not = 0$ for some $(k_0,p_0,q_0)$. Then for $(s,a,b) \in (\mathbb R_{>0})^3$ we have
$$ MH_X(s,a,b) = 1 +\sum_{k\ge 1, p \ge 0, q \ge 0} b_{k,p,q}s^ka^pb^q \ge 1 + b_{k_0,p_0,q_0}s^{k_0}a^{p_0}b^{q_0} >1.$$
Therefore whatever the value of $MH^{\pi}_{X}(s,a,b)$ is, by the same argument as in the proof of Theorem \ref{general thm}, 
$$\lim_{n \to \infty} \frac{n(MH^{\pi}_{X}(s,a,b))}{(MH_X(s,a,b))^n} = MH^{\pi}_{X}(s,a,b) \lim_{n \to \infty} \frac{n}{(MH_X(s,a,b))^n} =0.$$
Hence there exists a positive integer $\widehat{N_{(s,a,b)}}$ 
such that for $\forall n \ge \widehat{N_{(s,a,b)}}$
\begin{equation}\label{opencond}
\frac{n(MH^{\pi}_{X}(s,a,b))}{(MH_X(s,a,b))^n} < 1.
\end{equation}
Equivalently, we have for 
$\forall n \ge \widehat{N_{(s,a,b)}}$
\begin{equation*} \label{MH-ineq-1}
nMH^{\pi}_X(s,a,b) <  MH_X(s,a,b)^n.
\end{equation*}
Now the proof is concluded as the proof of Lemma \ref{general thm}
using openness of condition (\ref{opencond}). 
\end{proof}
The following theorem follows from the above proposition and the
compactness of the cube $\mathscr C_{\varepsilon,r}$.
\begin{thm} 
 Let $\varepsilon$ and $r$ be positive real numbers such that $\varepsilon <r$ and let $\mathscr C_{\varepsilon,r}:=[\varepsilon, r] \times [\varepsilon,r] \times [\varepsilon,r] \subset (\mathbb R_{\ge 0})^3$ 
Let $X$ be a rationally elliptic quasi-projective variety. Then there exists a positive integer 
$n_{\varepsilon, r}$ such that for all $n \ge n_{\varepsilon, r}$
the following strict inequality holds:
$$MH^{\pi}_{X^n}(t,u,v) <  MH_{X^n}(t,u,v)$$
for $\forall (t,u,v) \in \mathscr C_{\varepsilon,r}.$ 
\end{thm}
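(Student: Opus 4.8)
The plan is to deduce the statement from Proposition \ref{prop} by the standard compactness argument, exactly as the remark preceding it suggests. Since $\varepsilon>0$, the cube $\mathscr C_{\varepsilon,r}=[\varepsilon,r]\times[\varepsilon,r]\times[\varepsilon,r]$ is a compact subset of $(\mathbb R_{>0})^3$, so Proposition \ref{prop} is applicable at every one of its points. Thus for each $P_0=(s,a,b)\in\mathscr C_{\varepsilon,r}$ we obtain an integer $n_{P_0}$ and an open neighborhood $U_{P_0}\subset(\mathbb R_{>0})^3$ of $P_0$ such that $MH^{\pi}_{X^n}<MH_{X^n}$ throughout $U_{P_0}$ for every $n\ge n_{P_0}$. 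The family $\{U_{P_0}\}_{P_0\in\mathscr C_{\varepsilon,r}}$ is then an open cover of the compact set $\mathscr C_{\varepsilon,r}$; I would extract a finite subcover $U_{P_1},\dots,U_{P_N}$ and set $n_{\varepsilon,r}:=\max\{n_{P_1},\dots,n_{P_N}\}$. For every $n\ge n_{\varepsilon,r}$ and every $(t,u,v)\in\mathscr C_{\varepsilon,r}$, the point $(t,u,v)$ lies in some $U_{P_i}$ with $n\ge n_{P_i}$, and hence $MH^{\pi}_{X^n}(t,u,v)<MH_{X^n}(t,u,v)$, which is the claim.

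The one point that deserves care — and the only place where anything beyond bookkeeping occurs — is the claim that in Proposition \ref{prop} the neighborhood $U_{P_0}$ may be chosen independently of $n$. I would make this explicit. If all coefficients $b_{k,p,q}$ of $MH_X$ vanish then $X$ is rationally trivial, $MH^{\pi}_X\equiv 0$, and there is nothing to prove; otherwise $MH_X(t,u,v)=1+\sum b_{k,p,q}t^ku^pv^q$ has some $b_{k_0,p_0,q_0}>0$. Fix a closed box $\overline B\subset(\mathbb R_{>0})^3$ with $P_0$ in its interior. Since all $b_{k,p,q}\ge 0$ and some is positive and the coordinates are bounded below by a positive constant on $\overline B$, the polynomial $MH_X$ attains on $\overline B$ a minimum $c>1$, while the polynomial $MH^{\pi}_X$ attains a finite maximum $M$. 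Using the multiplicativity and additivity relations (\ref{mh-pi-multi}) and (\ref{mh-pi-additive}),
\[
\frac{n\,MH^{\pi}_{X^n}(t,u,v)}{MH_{X^n}(t,u,v)}=\frac{n\,MH^{\pi}_X(t,u,v)}{MH_X(t,u,v)^n}\le\frac{nM}{c^n}\xrightarrow[n\to\infty]{}0
\]
uniformly on $\overline B$, so there is $n_{P_0}$ with $n\,MH^{\pi}_{X^n}<MH_{X^n}$ on $U_{P_0}:=\operatorname{int}\overline B$ for all $n\ge n_{P_0}$, as needed. (In fact the same estimate, applied directly with $\overline B=\mathscr C_{\varepsilon,r}$, proves the theorem in one stroke; the covering formulation above is the one matching the phrasing ``follows from the above proposition and the compactness of the cube,'' and is what I would put in the text.)

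I do not expect a genuine obstacle here: all of the analytic content is already contained in Proposition \ref{prop} (equivalently in Lemma \ref{general thm}), and the passage from a local statement to one uniform on the cube is the familiar finite-subcover step. The only subtlety is securing uniformity in $n$ of the local neighborhoods, and that is handled by the boundedness estimate above, which uses nothing more than $MH_X>1$ on the cube together with $\varepsilon>0$ and the finiteness of $MH^{\pi}_X$ (valid because $X$ is rationally elliptic).
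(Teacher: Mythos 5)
Your proof is correct and follows exactly the route the paper sketches: the theorem is derived from Proposition \ref{prop} by a finite-subcover argument on the compact cube $\mathscr C_{\varepsilon,r}$. The paper leaves this as a one-line statement (``follows from the above proposition and the compactness of the cube''), so your job was to supply the details, and you did so correctly. In particular, you were right to flag the only genuine subtlety — that the neighborhood in Proposition \ref{prop} must be chosen uniformly over $n\ge n_{P_0}$, not separately for each $n$ — and your fix (bounding $MH^\pi_X$ above by $M$ and $MH_X$ below by $c>1$ on a compact box so that $nM/c^n\to 0$ uniformly) is the clean way to secure this. Your parenthetical observation that the same estimate applied directly to $\overline B=\mathscr C_{\varepsilon,r}$ proves the theorem in one step is also valid and is arguably the most economical formulation, though the covering version matches the paper's phrasing more closely. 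No gaps.
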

\begin{rem} In a similar manner to the proof of (\ref{pp-product}) in Remark \ref{form-product}, we can see the following inequality as to the threshold $\frak{mhp}$:
\begin{equation*}
 \frak{mhp}(X\times Y;a, b, c) \le \op{max} \{\frak{mph}(X;a,b,c),\frak{mhp}(Y;a,b,c) \}
\end{equation*}
for positive real numbers $a, b, c$ such that $MH_X(a,b,c) \ge 2$ and $MH_Y(a,b,c) \ge 2$.
\end{rem}
\begin{rem} 
We defined in Introduction the stabilization threshold $\frak{pp}(X;\varepsilon)$ as the smallest integer $n_0$ such that for all $n \ge n_0$ the following inequality (\ref{var}) holds: $P^{\pi}_{X^n}(t) <P_{X^n}(t) (\forall t \ge \varepsilon)$. In particular, it takes place for the product space $X^{\frak{pp}(X;\varepsilon)}$. On the other hand this inequality is equivalent to $nP^{\pi}_X(t) < \left (P_X(t) \right)^n (\forall t \ge \varepsilon)$, study of which is a key ingredient for our results. This inequality can be considered without assuming that $n$ is an integer, but for $n$ being a positive real number. The same applies to the stabilization threshold  $\frak{mph}(X;a,b,c)$. Thus we can consider the \emph{real stabilization thresholds} $\frak{pp}_{\mathbb R}(X;\varepsilon)$ and $\frak{mph}_{\mathbb R}(X;a,b,c)$, which are more subtle invariants than the integral ones and are more difficult to analyze. For details on properties and calculation of 
these invariants of pairs of polynomials, 
rational elliptic homotopy types and quasi-projective varieties,  we refer to \cite{LY}.
\end{rem}
\section{Bounds for Stabilization Thresholds}\label{bounds}
We will start with a conditional result, which yields unconditional 
bound in small dimensions. 
\begin{thm}\label{<3} If a simply connected rationally elliptic space $X$ satisfies the Hilali conjecture, then we have
$\frak{pp}(X;1) \le 3.$
\end{thm}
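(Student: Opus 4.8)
The plan is to reduce the statement to the two-variable inequality $n\,P^{\pi}_X(1) < P_X(1)^n$ for all $n \ge 3$, since by the additivity/multiplicativity of the Poincar\'e polynomials under products this is equivalent to $P^{\pi}_{X^n}(1) < P_{X^n}(1)$, and since (by monotonicity of both $P^{\pi}_{X^n}$ and $P_{X^n}$ in $t$ together with Lemma \ref{general thm}) controlling the value at $t=1$ is what pins down $\frak{pp}(X;1)$. Write $a := P^{\pi}_X(1) = \dim(\pi_*(X)\otimes \Q)$ and $b := P_X(1) = \dim H^*(X;\Q)$. The Hilali conjecture gives $a \le b$. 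By Remark \ref{remark-0} we may assume $P_X(t) > 1$, so $b \ge 2$, and in fact the formal dimension is at least $2$, which (via Poincar\'e duality, item \ref{poincare}, giving $b_0 = b_n = 1$) forces $b \ge 2$; more carefully, $b\ge 2$ with equality essentially only in degenerate cases. So it suffices to prove: if $2 \le a \le b$, then $3b \le b^3$, hence $3a \le 3b \le b^3$, and then to upgrade $\le$ to the strict inequality needed and to handle the boundary case $n=3$ versus $n \ge 3$.

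\textbf{Key steps in order.} First I would record the equivalence $\bigl(P^{\pi}_{X^n}(1) < P_{X^n}(1)\bigr) \iff \bigl(n\,a < b^n\bigr)$ and note that if $n a < b^n$ then $(n+1)a < a + b^n \le b^n + b^n \cdot(b-1)\cdot(\text{something}) \le b^{n+1}$ provided $b \ge 2$ and $a \le b^n$ — i.e. the inequality, once true for some $n_0$, persists for all larger $n$; this is the monotonicity/induction step, entirely parallel to the argument in the proof of Lemma \ref{general thm} (the function $R(n) = b^n - n a$ is eventually increasing in $n$ once $\log b \cdot b^{n} > a$). Second, I would verify the base case $n = 3$: we must show $3a < b^3$. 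Using $a \le b$ it suffices to show $3b < b^3$, i.e. $b^2 > 3$, i.e. $b \ge 2$, which holds except possibly when $b=2$ and $a=2$, where $3a = 6 < 8 = b^3$ still holds — so in fact $3a < b^3$ holds for all $b \ge 2$ and $a \le b$. Third, I would also need to check $n=3$ is genuinely the threshold bound, i.e. that the inequality can fail for $n=2$ (otherwise the bound could be improved, but that is not claimed); since we only need $\frak{pp}(X;1) \le 3$, it is enough that $n\,a < b^n$ for \emph{all} $n \ge 3$, which the base case plus induction delivers. Finally I would assemble: $\frak{pp}(X;1)$ is by definition the smallest $n_0$ with $P^\pi_{X^n}(1) < P_{X^n}(1)$ for all $n \ge n_0$; we have shown $n=3$ works, so $\frak{pp}(X;1) \le 3$.

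\textbf{Main obstacle.} The only genuinely delicate point is establishing that $b := \dim H^*(X;\Q) \ge 2$ cleanly — one must invoke that $X$ is simply connected of formal dimension $n \ge 2$ so $H^0 \cong H^n \cong \Q$ are distinct summands (Poincar\'e duality, item \ref{poincare}), giving $b \ge 2$, and then separately confirm that the worst case $a = b = 2$ still satisfies $3a = 6 < 8 = b^3$. After that, the inequality $3b \le b^3$ for $b \ge 2$ is immediate, and the persistence for $n > 3$ is the same elementary calculus estimate used for Lemma \ref{general thm} (differentiate $b^n - na$ in $n$ and use $\log b \ge \log 2 > 0$, so $b^n - na$ is increasing once $b^n \log b > a$, which already holds at $n=3$ since $b^3 \log b \ge 8\log 2 > 2 \ge a$). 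So I expect the proof to be short: one line of reduction, one inequality $3b \le b^3$, one monotonicity remark, done.
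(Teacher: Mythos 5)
There is a genuine gap in your argument: the stabilization threshold $\frak{pp}(X;1)$ is by definition the smallest $n_0$ such that $P^{\pi}_{X^n}(t) < P_{X^n}(t)$ holds for \emph{all} $t \ge 1$, not just at $t=1$. Your reduction to the single point $t=1$ rests on the claim that ``by monotonicity of both $P^{\pi}_{X^n}$ and $P_{X^n}$ in $t$ together with Lemma \ref{general thm}, controlling the value at $t=1$ is what pins down $\frak{pp}(X;1)$,'' but this inference is not valid. Both $n P^{\pi}_X(t)$ and $P_X(t)^n$ are increasing in $t$, but nothing forces the difference $P_X(t)^n - n P^{\pi}_X(t)$ to be monotone; a positive value at $t=1$ does not in itself rule out the difference dipping negative for some $t>1$ and recovering only at large $t$. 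Lemma \ref{general thm} cannot rescue this either, since it is a pure existence statement about some threshold, with no control over whether that threshold is as small as $3$ or what the worst value of $t$ is. In short, you only prove the inequality $n\,a < b^n$ at $t=1$, which does not establish $\frak{pp}(X;1)\le 3$.

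The paper's proof spends essentially all of its effort on exactly the part you skipped. It invokes the degree bound $\ell := \deg P^{\pi}_X \le 2n-1$ from \cite[Theorem 32.15]{FHT} to replace $P^{\pi}_X(t)$ by $t^{2n-1}\,P^{\pi}_X(1)$ for $t\ge 1$, then feeds in the Hilali inequality $P^{\pi}_X(1)\le P_X(1)$, and finally expands $\bigl(t^n + b_{n-2}t^{n-2}+\cdots+b_2t^2+1\bigr)^3$ term by term to land on $(t^n-1)^3 + 2 > 0$. It also proves a $t$-uniform induction step: if $\frak m\,P^{\pi}_X(t)<P_X(t)^{\frak m}$ for all $t\ge 1$, then the same holds for $\frak m+1$, using $P_X(t)\ge 1+t^n \ge 1 + 1/\frak m$. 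Your $t=1$ arithmetic (base case $3a\le 3b<b^3$ for $b\ge 2$, plus induction in $n$ at $t=1$) is correct as far as it goes and mirrors the paper's scalar bookkeeping, but it is only the shadow at $t=1$ of the actual argument. You also misidentify the ``main obstacle'' as establishing $b\ge 2$ — that is disposed of instantly by Remark \ref{remark-0} and Poincar\'e duality; the real obstacle is the uniformity over $t\ge 1$, which requires the degree bound $\ell \le 2n-1$ that never appears in your proposal.
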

\begin{proof} 
Let $X$ be a 
simply connected
  rationally elliptic space 
of formal dimension $n$. Let the homotopical and cohomological Poincar\'e polynomials of $X$ be
$$P^{\pi}_X(t)=a_2t^2+ \cdots + a_it^i + \cdots + a_{\ell}t^{\ell},$$
$$P_X(t)=1 + b_2t^2 + \cdots + b_kt^k + \cdots + t^{n}.$$
(Note that $a_2=b_2$ by the Hurewicz theorem and recall that $b_n=1$ and $b_{n-1}=b_1=0$.)

First we observe that in order to prove that for a positive integer $\frak m \ge 2$
\begin{equation*}
\frak{pp}(X;1) \le \frak m,
\end{equation*}
it suffices to show that
\begin{equation}\label{m}
\frak m P_X^{\pi}(t) < P_X(t)^{\frak m} \quad (\forall t \ge 1).
\end{equation}
Which implies that 
\begin{equation}\label{m+1}
(\frak m+1) P_X^{\pi}(t) < P_X(t)^{\frak m+1} \quad  (\forall t \ge 1)
\end{equation}
and by induction we get  $mP_X^{\pi}(t) < P_X(t)^m \, (\forall t \ge 1)$ for $\forall m \ge \frak m.$
Indeed, the inequality (\ref{m}) implies
\begin{equation}\label{m+1/m}
(\frak m +1)P_X^{\pi}(t) < (\frak m +1) \left (\frac{1}{\frak m} P_X(t)^{\frak m} \right).
\end{equation}
Now
\begin{align*}
P_X(t)^{\frak m +1} -  (\frak m +1) \left (\frac{1}{\frak m} P_X(t)^{\frak m} \right) & =P_X(t)^{\frak m} \left (P_X(t) - \frac{\frak m +1}{\frak m} \right)\\
& =P_X(t)^{\frak m} \left (1 + b_2t^2 + \cdots t^n - 1 - \frac{1}{\frak m} \right)\\
& \ge P_X(t)^{\frak m} \left (t^n - \frac{1}{\frak m} \right)\\
& \ge P_X(t)^{\frak m} \left (1 - \frac{1}{\frak m} \right)\\
& >0 \quad \text{(since $\frak m \ge 2$)}
\end{align*}
Hence we obtain (\ref{m+1}) by the inequality (\ref{m+1/m}).

Now, we show that
$$3P_X^{\pi}(t) < P_X(t)^3 \quad \forall t \ge 1.$$
First,  
we need to observe that it follows from \cite[Theorem
 32.15]{FHT} that we have the following bound for the degree $\ell$ of $P^{\pi}_X(t)$:
\begin{equation}\label{2n-1}
\ell \le 2n -1.
\end{equation}
\begin{align*}
& (P_X(t))^3 - 3P^{\pi}_X(t) \\
& = (t^{n} + b_{n-2}t^{n-2} + \cdots +b_2t^2+ 1)^3 - 3(a_{\ell}t^{\ell} + \cdots +a_2t^2) \\
& \ge  (t^{n} + b_{n-2}t^{n-2}
 + \cdots  +b_2t^2+ 1)^3- 3t^{\ell}(a_{\ell} + \cdots +a_2) \quad \text{(since $t^j \ge t^2 (j \ge 2)$ for $\forall t \ge 1$)}\\
& \ge  (t^{n} + b_{n-2}t^{n-2} +  \cdots + b_2t^2+ 1)^3 - 3t^{2n-1}(a_{\ell} + \cdots +a_2) \qquad \text{(by (\ref{2n-1}))}
\end{align*}
The Hilali conjecture is $\op{dim} \left (\pi_*(X) \otimes \mathbb Q \right ) \le \op{dim} H_*(X;\mathbb Q)$, i.e. $P^{\pi}_X(1) \le P_X(1) $, or
\begin{equation*}
a_{\ell} + \cdots +a_2 \le 1+ b_{n-2}+ \cdots + b_2 + 1.
\end{equation*}
Before going furthermore, for the presentation below we point out the following about $1+ b_{n-2} + \cdots + b_2 + 1$:
\begin{enumerate}
\item If $n=2$, $P_X(t) =1 +t^2$ (thus, $P^{\pi}_X(t) =t^2 + \cdots$). Hence $1+ b_{n-2} + \cdots + b_2 + 1=1+1$, thus, the part $b_{n-2}+\cdots +b_2 =0$.
\item If $n=3$, then $P_X(t) =1 + t^3$ (thus, $P^{\pi}_X(t) =t^3 + \cdots$), since it follows from the Poincar\'e duality of Betti numbers (see \ref{poincare} in Introduction) that $b_2=b_1=0$. Hence $1+ b_{n-2} + \cdots + b_2 + 1=1+1$, thus, the part $b_{n-2}+\cdots +b_2 =0$.
\item If $n=4$, then $P_X(t) =1 + b_2t^2 + t^4$, since $b_3=b_1=0$. Hence $1+ b_{n-2} + \cdots + b_2 + 1=1+ b_2+ 1$, thus, the part $b_{n-1}+\cdots +b_2 =b_2$.
\end{enumerate}
With the part $b_{n-2}+\cdots +b_2$ in the cases when $n=2,3,4$ being understood as above,
the above sequence of inequalities continues as follows:
\begin{align*}
& \ge  (t^{n} + b_{n-2}t^{n-2}+ \cdots + b_2t^2 + 1)^3 - 3t^{2n-1}(1+ b_{n-2}+\cdots + b_2 + 1) \\
& = \left \{ (t^n +1) + (b_{n-2}t^{n-2} + \cdots +b_2t^2) \right\}^3 - 3t^{2n-1}\left \{ 2 + (b_{n-2}+\cdots + b_2) \right\} \\
& \ge (t^n +1)^3 + 3(t^n +1)^2(b_{n-2}t^{n-2} + \cdots +b_2t^2) - 6t^{2n-1}- 3t^{2n-1}(b_{n-2}+\cdots + b_2) \\
& \ge (t^n +1)^3 - 6t^{2n-1}+ 3(t^n +1)^2(b_{n-2}+ \cdots +b_2) - 3t^{2n-1}(b_{n-2}+\cdots + b_2)  \\
& \ge (t^n +1)^3 - 6t^{2n-1} + 3t^{2n}(b_{n-2}+ \cdots +b_2) - 3t^{2n-1}(b_{n-2}+\cdots + b_2)  \\
& \hspace{8cm} \text{(using $(t^n +1)^2 \ge t^{2n}$)}\\
& = (t^n +1)^3 - 6t^{2n-1}+ 3(t^{2n}- t^{2n-1}) (b_{n-2}+ \cdots +b_2) \\
& \ge  (t^n +1)^3 - 6t^{2n-1}\hspace{3cm} \text{(since $t^{2n} - t^{2n-1} =t^{2n-1}(t-1) \ge 0$)}\\
& \ge  (t^n +1)^3 - 6t^{2n} \hspace{3cm} \text{(again, since $t^{2n} \ge t^{2n-1}$ for $t \ge 1$)}\\
& = (t^n)^3 -3(t^n)^2+3t^n +1 \\
& =(t^n-1)^3 +2 \\
& >0.
\end{align*}
Therefore, $3P_X^{\pi}(t) < P_X(t)^3 \quad \forall t \ge 1.$
\end{proof}

Combining 
Theorem \ref{<3} with the result of  \cite{CatMil} we obtain:

\begin{cor}\label{20} For a rationally elliptic space $X$ of homological dimension
  not exceeding 20, the stabilization threshold $\frak{pp}(X;1)$ is at most 3.
\end{cor}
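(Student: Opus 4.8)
The plan is to obtain Corollary \ref{20} as an immediate specialization of Theorem \ref{<3}, using the known range in which the Hilali conjecture has been verified. First I would recall that in this paper a rationally elliptic space is by definition simply connected, and that by Poincar\'e duality for rationally elliptic spaces (item \ref{poincare} in the Introduction) its homological dimension equals its formal dimension $n$, the maximal degree with $H^n(X;\Q)\ne 0$. So the hypothesis ``homological dimension not exceeding $20$'' is precisely the statement that the formal dimension satisfies $n\le 20$.

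Next I would invoke the main result of \cite{CatMil}, which asserts that the Hilali conjecture
$$\op{dim}\bigl(\pi_*(X)\otimes\Q\bigr)\le \op{dim} H^*(X;\Q)$$
holds for every simply connected rationally elliptic space of formal dimension at most $20$. Since our $X$ is simply connected, rationally elliptic, and has formal dimension $n\le 20$, it therefore satisfies the hypothesis of Theorem \ref{<3}. Applying that theorem yields $\frak{pp}(X;1)\le 3$, which is the desired conclusion. The degenerate case $P_X(t)=1$, in which $X$ is rationally homotopy equivalent to a point, is already covered by Remark \ref{remark-0}, where $\frak{pp}(X;1)=1\le 3$, so it causes no difficulty and one may otherwise assume $P_X(t)>1$.

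The only point that needs a brief check is purely bookkeeping: that the ``homological dimension'' in the statement of the corollary, the ``formal dimension'' in the statement of Theorem \ref{<3}, and the invariant used in \cite{CatMil} are the same quantity; this is immediate from Poincar\'e duality as noted above. There is no genuine analytic obstacle in this corollary --- all of the substantive work is contained in Theorem \ref{<3}, and the corollary is the routine consequence of feeding into it the current unconditional range of validity of the Hilali conjecture.
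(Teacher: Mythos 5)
Your proposal is correct and is exactly the paper's argument: the paper obtains this corollary by combining Theorem \ref{<3} with the verification in \cite{CatMil} of the Hilali conjecture up to formal dimension $20$. Your additional remarks on the equivalence of ``homological dimension'' and ``formal dimension'' and on the degenerate case are harmless elaborations of the same route.
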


The next proposition gives unconditional bound on the stabilization threshold, 
depending, however, on the homological dimension.

\begin{pro}\label{<n}
Let $X$ be a simply connected rationally elliptic
space of formal dimension $n \ge 3$. Then we have  
\begin{equation*}\label{up}
\frak{pp}(X;1) \le n.
\end{equation*}
\end{pro}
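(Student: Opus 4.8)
The plan is to follow the scheme of the proof of Theorem~\ref{<3}: reduce to a single polynomial inequality, then estimate crudely, using an \emph{unconditional} substitute for the Hilali bound. First, by exactly the inductive step established in the proof of Theorem~\ref{<3}, for any integer $\frak m\ge 2$ the inequality $\frak m P^{\pi}_X(t)<P_X(t)^{\frak m}$ for all $t\ge 1$ implies $mP^{\pi}_X(t)<P_X(t)^{m}$ for all $m\ge\frak m$ and all $t\ge 1$, hence $\frak{pp}(X;1)\le\frak m$. Taking $\frak m=n\ (\ge 3\ge 2)$, it therefore suffices to prove
\[
 n\,P^{\pi}_X(t)<P_X(t)^{n}\qquad(\forall\,t\ge 1).
\]

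The input replacing the Hilali conjecture is the unconditional bound $P^{\pi}_X(1)=\dim(\pi_*(X)\otimes\Q)\le n$. Writing $a=\dim\pi_{\op{odd}}(X)\otimes\Q$ and $b=\dim\pi_{\op{even}}(X)\otimes\Q$, simple connectivity forces $\op{deg}x_i\ge 3$, so \ref{b} together with the second inequality of \ref{a} gives
\[
 3a\le\sum_i\op{deg}x_i=n+\sum_j(\op{deg}y_j-1)=n+\Big(\sum_j\op{deg}y_j\Big)-b\le 2n-b,
\]
i.e. $3a+b\le 2n$; since moreover $a\ge b$ (property (c), i.e. $\chi^{\pi}(X)\le 0$), one gets $2(a+b)=(3a+b)-(a-b)\le 2n$, hence $a+b\le n$. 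As the degree $\ell$ of $P^{\pi}_X(t)$ satisfies $\ell\le 2n-1$ (recalled in the proof of Theorem~\ref{<3}, see \cite[Theorem 32.15]{FHT}), for $t\ge 1$ we have $P^{\pi}_X(t)\le P^{\pi}_X(1)\,t^{2n-1}\le n\,t^{2n-1}$, while $P_X(t)\ge 1+t^{n}$ because all Betti numbers are nonnegative and $b_0=b_n=1$. Thus it is enough to prove $(1+t^{n})^{n}>n^{2}t^{2n-1}$ for all $t\ge 1$, for then $P_X(t)^{n}\ge(1+t^{n})^{n}>n^{2}t^{2n-1}\ge n\,P^{\pi}_X(t)$.

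For $n\ge 5$ this is immediate. The function $\varphi(t):=(1+t^{n})^{n}t^{-(2n-1)}$ has
\[
 \big(\log\varphi\big)'(t)=\frac{(n-1)^{2}t^{n}-(2n-1)}{t\,(1+t^{n})},
\]
which is $\ge 0$ for $t\ge 1$ once $(n-1)^{2}\ge 2n-1$, i.e. for $n\ge 4$; hence $\varphi$ is nondecreasing on $[1,\infty)$ and $\varphi(t)\ge\varphi(1)=2^{n}>n^{2}$ (the last inequality holding precisely for $n\ge 5$), which is exactly $(1+t^{n})^{n}\ge 2^{n}t^{2n-1}>n^{2}t^{2n-1}$.

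It remains to dispose of $n=3$ and $n=4$, where $2^{n}\le n^{2}$ and the estimate above is no longer decisive; here one exploits how restrictive rational ellipticity is in low dimension. For $n=3$, Poincaré duality and simple connectivity force $b_1=b_2=0$, so $\dim H^{*}(X;\Q)=2$, $X$ is rationally $S^{3}$, and $(1+t^{3})^{3}-3t^{3}=1+3t^{6}+t^{9}>0$. For $n=4$: if $b_2=0$ then likewise $\dim H^{*}(X;\Q)=2$, $X$ is rationally $S^{4}$, $P^{\pi}_X(t)=t^{4}+t^{7}$, and $(1+t^{4})^{4}-4(t^{4}+t^{7})=1+4t^{12}+t^{16}+t^{7}(6t-4)>0$ for $t\ge 1$; if $b_2\ge 1$ then $P_X(t)\ge 1+t^{2}+t^{4}$ while $P^{\pi}_X(t)\le 4t^{7}$, and $(1+t^{2}+t^{4})^{4}t^{-7}$ is increasing on $[1,\infty)$ (its logarithmic derivative has numerator $9t^{4}+t^{2}-7$, positive for $t\ge 1$) with value $81>16$ at $t=1$, giving $(1+t^{2}+t^{4})^{4}>16t^{7}\ge 4P^{\pi}_X(t)$. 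The one genuine obstacle is exactly this split: the uniform argument needs $2^{n}>n^{2}$, so dimensions $3$ and $4$ fall outside it and must be treated by hand — which is painless only because there the rational homotopy type is essentially determined.
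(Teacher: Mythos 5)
Your proof is correct, but it takes a genuinely different route from the paper's at two points. The reduction step and the bounds $\ell \le 2n-1$, $P^{\pi}_X(1) \le n$ are the same (you even re-derive the latter from the Friedlander--Halperin constraints \ref{a}--\ref{b} and $\chi^\pi \le 0$, which is a nice self-contained touch; the paper just cites \cite[Theorem 32.15]{FHT}). For $n \ge 5$, the paper bounds $(1+t^n)^n - n^2 t^{2n}$ from below via the binomial expansion $\sum_{k\ge 2}\binom{n}{k}t^{nk} \ge t^{2n}(2^n - n - 1)$ and needs $2^n > n^2 + n + 1$; you instead show by a logarithmic-derivative computation that $\varphi(t) = (1+t^n)^n t^{-(2n-1)}$ is nondecreasing on $[1,\infty)$ for $n \ge 4$ and reduce to $\varphi(1) = 2^n > n^2$. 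Same threshold ($n \ge 5$), but your route avoids expanding the binomial and isolates the monotonicity more cleanly. The more substantial divergence is at $n = 3, 4$: the paper delegates these cases to Corollary \ref{20}, which rests on the external verification of the Hilali conjecture up to dimension $20$ in \cite{CatMil}; you instead enumerate the possible Poincar\'e polynomials by hand and verify the inequality directly, which makes the proposition fully self-contained. One unstated step: in the subcases $P_X(t) = 1 + t^3$ and $P_X(t) = 1 + t^4$ you pass to the homotopy groups of $S^3$ and $S^4$; this needs the fact (intrinsic formality of spheres, or a short Friedlander--Halperin bookkeeping argument) that an elliptic space with the rational cohomology ring of a sphere is rationally that sphere. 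It is true and standard, but deserves a citation in a final write-up.
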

The argument below uses, in addition to (\ref{2n-1}),
  the following bound (cf. \cite[Theorem
 32.15]{FHT}):
\begin{equation}\label{<n}
P^{\pi}_X(1)=a_2+ a_3+ \cdots  + a_{\ell} \le n.
\end{equation}

\begin{proof} In order to prove the 
proposition, it suffices to show that
$$nP_X^{\pi}(t) < P_X(t)^n \quad \forall t \ge 1.$$
\begin{align*}
& (P_X(t))^n - nP^{\pi}_X(t) \\
& \ge (t^{n} + 1)^n - n(a_{\ell}t^{\ell} + \cdots +a_2t^2) \\
& \ge (t^{n} + 1)^n - nt^{\ell}(a_{\ell}+ \cdots +a_2) \\
& \ge (t^{n} + 1)^n - n^2t^{2n-1} \qquad \text{(by (\ref{2n-1}) and (\ref{<n})}\\
& \ge (t^{n} + 1)^n - n^2t^{2n} \qquad \text{(since $t^{2n} \ge t^{2n-1}$ for $t \ge 1$)}\\
& = \sum_{k=0}^n {n \choose k} t^{nk} - n^2t^{2n} \\
& >  \sum_{k=2}^n {n \choose k} t^{nk} - n^2t^{2n} \\
& \ge  t^{2n}\sum_{k=2}^n {n \choose k} - n^2t^{2n} \qquad \text{(since $t^{nk} \ge t^{2n}$ for $k \ge 2$ and $t \ge 1$)}\\
& = t^{2n} \left \{\sum_{k=2}^n {n \choose k} - n^2 \right \}\\
& = t^{2n} \left \{\sum_{k=0}^n {n \choose k} - {n \choose 1} - {n \choose 0} - n^2 \right \}\\
& = t^{2n} \left \{ 2^n - (n^2+n+1) \right \}\\
& > 0
\end{align*}

assuming that $n \ge 5$. For $n=3,4$ the claim follows from
Corollary \ref{20} above.
\end{proof}
\begin{rem} For $n=2$ the above proposition does not hold since the formal dimension of $\mathbb CP^1$ is $2$, but $\frak{pp}(\mathbb CP^1;1)=3$.
\end{rem}
We conclude this section with the question on ``mixed Hodge polynomial" version of Theorem \ref{<3} and Proposition \ref{<n}. More precisely:
\begin{enumerate}
\item Does there exist a fixed integer $\frak a (\ge 3)$ such that $\frak{mhp}(X;1,1,1) \le \frak a$ for any rationally elliptic quasi-projective variety $X$ satisfying the Hilali conjecture?
\item Does there exist an integer $\frak a(n) (\ge n)$ such that $\frak{mhp}(X;1,1,1) \le \frak a(n)$ for any rationally elliptic quasi-projective variety $X$ of formal dimension $n$?
\end{enumerate}
\section{Examples and concluding remarks}\label{examples}
Here we present several explicit calculations of thresholds and introduce and discuss some property of homotopical $E$-function which is an analog of classical cohomological $E$-function.

\subsection{Examples}
The purpose of this section is to provide examples of calculations
of exact values of stabilization thresholds. 

\subsubsection{$\C^{n+1}\setminus 0$}\label{complementtopoint} 
Here $n >0$.
This is a smooth quasi-projective variety, for which 
the mixed Hodge structures on cohomology and homotopy 
can be constructed using log-forms (cf. \cite{De1} and \cite{Mo} resp.).
Since this space can be retracted on $S^{2n+1}$ and the Hurewicz
isomorphism preserves the Hodge structure (cf. \cite{Hain 1}) and
calculating the mixed Hodge structure on $H_n(\C^{n+1}\setminus 0)$ 
(for example using  Gysin exact sequence for the homology of the
complement to smooth divisor on the blow up of $\PP^{n+1}$ at a point)
we obtain:
\begin{equation*} MH_{\mathbb C^{n+1} \setminus
    \{0\}}(t,u,v) = 1 + t^{2n+1}(uv)^{n+1},
\end{equation*}
\begin{equation*} MH^{\pi}_{\mathbb C^{n+1} \setminus
  \{0\}}(t,u,v) = t^{2n+1}(uv)^{n+1}.
\end{equation*}
Hence we have
\begin{equation*}MH_{\mathbb C^{n+1} \setminus \{0\}}(t,u,v) = 1 +
  MH^{\pi}_{\mathbb C^{n+1} \setminus \{0\}}(t,u,v).
\end{equation*}

\subsubsection{Projective spaces}
\begin{ex}\label{ex-cpn}  We start with  $X=\mathbb CP^n$. We have
$$P_{\mathbb CP^n}(t)= 1+ t^2 + \cdots +t^{2n} \quad \text{and} \quad P_{\mathbb CP^n}^{\pi}(t)= t^2 + t^{2n+1}.$$
One easily verifies that 
\begin{equation*}
\frak{pp}(\mathbb CP^n;1)=
\begin{cases}3 & if \ \  n=1, \\
   2 & if \ \ n\ge 2.
\end{cases}
\end{equation*}
\end{ex}
The mixed Hodge polynomials are as follows:
\begin{equation*}MH_{\mathbb CP^n}(t,u,v) = 1 + t^2uv + t^4(uv)^2 +
  \cdots +t^{2i}(uv)^i + \cdots + t^{2n}(uv)^n.
\end{equation*}
\begin{equation*}MH_{\mathbb CP^n}^{\pi}(t,u,v) = t^2uv +
  t^{2n+1}(uv)^{n+1}. \hspace{5cm} 
\end{equation*}
The cohomological case is trivial and the claim in the homotopical case
follows using the Hurewicz isomorphism for $\pi_2$ and for higher
homotopy groups the locally trivial fibration $\mathbb C^{\times}
\hookrightarrow \mathbb C^{n+1} \setminus \{0\} \to \mathbb CP^n$, the
calculation in \S \ref{complementtopoint} and the corresponding exact sequence
$$ \cdots \to \pi_{2n+1}(\mathbb C^{\times}) \to \pi_{2n+1}(\mathbb C^{n+1} \setminus \{0\}) \to \pi_{2n+1}(\mathbb CP^n) \to \pi_{2n}(\mathbb C^{\times}) \to \cdots.$$
which is an exact sequence of mixed Hodge structures \cite[Theorem 4.3.4]{Hain 1}.

One easily verifies that:
\begin{enumerate}
\item $\frak{mhp}(\mathbb CP^1;1,1,1)=3.$
\item If $n \ge 2$, then $\frak{mhp}(\mathbb CP^n;1,1,1)=2.$ In fact, this can be made to the following a bit sharper statement: for $\forall m \ge 2$
$$MH^{\pi}_{(\mathbb CP^n)^m}(t,u,v) < MH_{(\mathbb CP^n)^m}(t,u,v)  \quad \text{for} \, \, \forall t \ge 1, \forall (u,v) \,\, \text{such that} \,\, uv \ge 1.$$
\end{enumerate}

\subsubsection{Compact toric manifolds}
In \cite[Theorem 3.3]{BMM} I. Biswas, V. Mu\~noz and A. Murillo show that the
homological Poincar\'e polynomial of a rationally elliptic toric
manifold coincides with that of a product of complex projective
spaces. Below, using a recent result due to M. Wiemeler \cite{W} we show that the same thing holds for the homotopical Poincar\'e polynomial, in fact, for the homotopical mixed Hodge polynomial, and furthermore we also show that the homological mixed Hodge polynomial of a rationally elliptic toric manifold coincides with that of a product of complex projective spaces, which is a stronger version of the above result of Biswas--Mu\~noz--Murillo:
\begin{thm}\label{toric MHP} The homotopical and cohomological  mixed Hodge polynomials of a rationally elliptic toric manifold of complex dimension $n$ coincides with 
those of a product of complex projective spaces. To be more precise, if $X$ is the quotient of
$$\prod_{i=1}^k (\mathbb C^{n_i+1} \setminus \{0\})$$
by a free action of commutative algebraic groups, i.e., $(\mathbb C^{\times})^k$. Here $n = \sum_{i=1}^k n_i.$ Then we have
\begin{enumerate}
\item  $MH^{\pi}_X(t,u,v)=MH^{\pi}_{\prod_i^k \mathbb CP^{n_i}} (t,u,v) = \sum_{i=1}^k MH^{\pi}_{\mathbb CP^{n_i}}(t,u,v)$, i.e.,
$$MH^{\pi}_X(t,u,v) = \sum_{i=1}^k \Bigl (t^2uv+t^{2n_i+1}(uv)^{n_i+1} \Bigr) = kt^2uv + \sum_{i=1}^k t^{2n_i+1}(uv)^{n_i+1}.$$
\item $MH_X(t,u,v)=MH_{\prod_i^k \mathbb CP^{n_i}} (t,u,v) = \prod_{i=1}^k MH_{\mathbb CP^{n_i}}(t,u,v)$, i.e.,
$$MH_X(t,u,v) = \prod_{i=1}^k \Bigl (1 + t^2uv + \cdots + t^{2j}(uv)^j + \cdots + t^{2n_i}(uv)^{n_i} \Bigr ).$$
\end{enumerate}
\end{thm}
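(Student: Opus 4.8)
The plan is to reduce the computation of both mixed Hodge polynomials to the known case of $\mathbb{C}^{n_i+1}\setminus\{0\}$ computed in \S\ref{complementtopoint}, together with Wiemeler's classification of rationally elliptic toric manifolds \cite{W}. First I would invoke \cite{W} to fix the geometric model: a rationally elliptic toric manifold $X$ of complex dimension $n$ is, up to a suitable equivalence, the quotient of $\prod_{i=1}^k(\mathbb{C}^{n_i+1}\setminus\{0\})$ by a \emph{free} action of $(\mathbb{C}^\times)^k$ acting coordinatewise (one $\mathbb{C}^\times$ scaling each factor), so that the quotient map is a locally trivial principal $(\mathbb{C}^\times)^k$-bundle. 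This is exactly the Cox-type presentation, and the key input is that rational ellipticity forces this product-of-punctured-affine-spaces form rather than a more general quotient of $\mathbb{C}^N\setminus Z$.

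For the cohomological statement (2), I would argue that since the quotient is by a free algebraic torus action, $X$ is the base of a principal bundle whose total space $E=\prod_i(\mathbb{C}^{n_i+1}\setminus\{0\})$ has $MH_E=\prod_i MH_{\mathbb{C}^{n_i+1}\setminus\{0\}}=\prod_i(1+t^{2n_i+1}(uv)^{n_i+1})$ by the multiplicativity of the cohomological mixed Hodge polynomial. One then runs the Leray–Serre / Eilenberg–Moore spectral sequence for $(\mathbb{C}^\times)^k\hookrightarrow E\to X$ in the category of mixed Hodge structures (the relevant spectral sequences are compatible with MHS, by \cite{Hain 1} as already used for $\mathbb{CP}^n$ in the excerpt); comparing with the analogous fibration $(\mathbb{C}^\times)^k\hookrightarrow\prod_i(\mathbb{C}^{n_i+1}\setminus\{0\})\to\prod_i\mathbb{CP}^{n_i}$ — for which the answer $\prod_i MH_{\mathbb{CP}^{n_i}}$ is already recorded — and using that $X$ and $\prod_i\mathbb{CP}^{n_i}$ have the same Betti numbers by Biswas–Muñoz–Murillo \cite{BMM}, one concludes the spectral sequences have the same $E_\infty$ with matching Hodge and weight gradings, giving $MH_X=\prod_i MH_{\mathbb{CP}^{n_i}}=\prod_i MH_{\mathbb{CP}^{n_i}}(t,u,v)$ as claimed. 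Alternatively, and perhaps more cleanly, I would use the known structure of $H^*(X;\mathbb{Q})$ as a complete intersection quotient of a polynomial ring on degree-$2$ generators of Hodge–Tate type $(1,1)$ (each the pullback of the hyperplane class on the corresponding $\mathbb{CP}^{n_i}$ factor of the bundle), which immediately forces $MH_X(t,u,v)$ to be obtained from $P_X(t)$ by the substitution $t^2\mapsto t^2uv$, and then apply \cite{BMM}.

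For the homotopical statement (1), I would use additivity of $MH^\pi$ under products together with the long exact sequence of homotopy groups of the fibration $(\mathbb{C}^\times)^k\hookrightarrow E\to X$, which is an exact sequence of mixed Hodge structures by \cite[Theorem 4.3.4]{Hain 1} — exactly the mechanism used in the excerpt for $\mathbb{CP}^n$. Since $\pi_*(\mathbb{C}^\times)\otimes\mathbb{C}$ is concentrated in degree $1$ (killed by simple-connectivity considerations at the relevant spots) with MHS of type $(1,1)$, and $\pi_*(E)\otimes\mathbb{C}=\bigoplus_i\pi_*(\mathbb{C}^{n_i+1}\setminus\{0\})\otimes\mathbb{C}$ with $MH^\pi_{\mathbb{C}^{n_i+1}\setminus\{0\}}=t^{2n_i+1}(uv)^{n_i+1}$, a degree count in the exact sequence shows $\pi_2(X)\otimes\mathbb{C}\cong\mathbb{C}^k$ carrying $k$ copies of the Tate class (one from each connecting map $\pi_2(X)\to\pi_1((\mathbb{C}^\times)^k)$), while in degrees $\ge 3$ the map $\pi_*(E)\to\pi_*(X)$ is an isomorphism of MHS; this yields $MH^\pi_X(t,u,v)=kt^2uv+\sum_i t^{2n_i+1}(uv)^{n_i+1}=\sum_i MH^\pi_{\mathbb{CP}^{n_i}}(t,u,v)$.

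The main obstacle I anticipate is not the spectral-sequence or exact-sequence bookkeeping, which is parallel to the $\mathbb{CP}^n$ case already in the paper, but rather justifying that Wiemeler's classification really delivers the clean product presentation $X=\big(\prod_i(\mathbb{C}^{n_i+1}\setminus\{0\})\big)/(\mathbb{C}^\times)^k$ with a \emph{free} linear torus action compatible with the algebraic/Hodge-theoretic structure, and that the resulting fibrations are genuinely locally trivial in the algebraic (or at least complex-analytic) category so that the MHS compatibility theorems of \cite{Hain 1} apply. Once that geometric input is secured, the Hodge structures on all groups involved are of Tate type, so there is no room for the weight or Hodge filtration to do anything other than what the degrees dictate, and the identities follow.
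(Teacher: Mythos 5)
Your treatment of part (1) is essentially the paper's: Wiemeler's model exhibits $X$ as the base of the principal $(\mathbb{C}^\times)^k$-bundle $\prod_i(\mathbb{C}^{n_i+1}\setminus\{0\})\to X$, and the long exact homotopy sequence, which is an exact sequence of mixed Hodge structures by Hain's theorem, gives exactly the degree count the paper runs.

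For part (2) you depart from the paper. The paper does not use the torus bundle for cohomology at all; it uses the other output of Wiemeler's classification, namely that $X$ is a Bott manifold, and applies Deligne degeneration of the Leray spectral sequence stage by stage along the Bott tower $X=B_k\to\cdots\to B_0=\{pt\}$ of $\mathbb{CP}^{n_i}$-bundles, multiplying out the mixed Hodge polynomials of the fibers. Your first suggested route --- compare the Leray/Eilenberg--Moore spectral sequences of the two $(\mathbb{C}^\times)^k$-bundles over $X$ and over $\prod_i\mathbb{CP}^{n_i}$ and deduce equality of the Hodge/weight gradings on $E_\infty$ from equality of Betti numbers --- is not a complete argument as stated. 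The Serre spectral sequence of $T\to E\to X$ runs \emph{from} $H^*(X;H^*(T))$ \emph{to} $H^*(E)$, so knowing $H^*(E)$, $H^*(T)$ and the Betti numbers of the base does not by itself determine the mixed Hodge structure on $H^*(X)$: the transgressions (equivalently, the $H^*(BT)$-module structure entering Eilenberg--Moore) carry the bundle data, and you would have to match those for the two bundles. Your second route, on the other hand, is correct and in fact more economical than the paper's: the rational cohomology ring of a smooth projective toric variety is generated in degree $2$ by divisor classes of type $(1,1)$, so $H^{2k}$ is pure of type $(k,k)$ and $H^{\mathrm{odd}}=0$; hence $MH_X(t,u,v)$ is forced to be the Hodge--Tate substitution $t^2\mapsto t^2uv$ applied to $P_X(t)$, and $P_X=P_{\prod_i\mathbb{CP}^{n_i}}$ by Biswas--Mu\~noz--Murillo. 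This bypasses the Bott-tower induction entirely. Your closing caveat about algebraic or analytic local triviality of the fibrations, needed to invoke Hain's compatibility results, is a fair one, but the paper's own argument rests on the same hypothesis.
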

\begin{proof} In \cite{W} M. Wiemeler shows that there is an algebraic isomorphism $X \cong X'$ where $X'$ is the quotient described above:
\begin{equation}\label{quotient}
X' = \Bigl (\prod_{i=1}^k (\mathbb C^{n_i+1} \setminus \{0\}) \Bigr )/ (\mathbb C^{\times})^k.
\end{equation}
(1) First we observe that
\begin{equation*}
\pi_j \Bigl ( \prod_{i=1}^k (\mathbb C^{n_i+1} \setminus \{0\})  \Bigr)\otimes \mathbb Q
= \begin{cases}
\underbrace{\mathbb Q \oplus \cdots \oplus \mathbb Q}_a & \,\, j=2n_i+1, \\
 \qquad \, \, 0 & \,\, j \not = 2n_i+1.
\end{cases}
\end{equation*}
Here $a$ is the number of the same integer $n_i$.
\begin{equation*}
\pi_j \Bigl ( (\mathbb C^{\times})^k \Bigr)\otimes \mathbb Q
= \begin{cases}
\underbrace{\mathbb Q \oplus \cdots \oplus \mathbb Q}_k & \,\, j=1, \\
 \qquad \, \, 0 & \,\, j \not = 1. 
\end{cases}
\end{equation*}
Hence, since each $2n_i+1 \ge 3$, it follows from the long exact sequences of homotopy groups that there is an isomorphism of mixed Hodge structures:
\begin{equation*}
\pi_j(X)\otimes \mathbb Q \cong 
\begin{cases}
\pi_j \Bigl ( \prod_{i=1}^k (\mathbb C^{n_i+1} \setminus \{0\})  \Bigr)\otimes \mathbb Q  & \quad j=2n_i+1,\\
\pi_1 \Bigl ( (\mathbb C^{\times})^k \Bigr)\otimes \mathbb Q = \underbrace{\mathbb Q \oplus \cdots \oplus \mathbb Q}_k, & \quad j=2, \\
 \qquad  \qquad  \qquad \, \, 0 & \quad j \not = 2, j=2n_i+1.
\end{cases}
\end{equation*}
Then it follows from the proof in the above  Example \ref{ex-cpn} that we have the isomorphism of mixed Hodge structures
\begin{equation*}
\pi_j(X)\otimes \mathbb Q \cong 
\begin{cases}
\pi_j \Bigl ( \prod_{i=1}^k \mathbb CP^{n_i} \Bigr)\otimes \mathbb Q  & \quad j=2, 2n_i+1,\\
 \qquad  \qquad  \qquad \, \, 0 & \quad j \not = 2, j=2n_i+1.
\end{cases}
\end{equation*}

Therefore we have 
$$MH^{\pi}_X(t,u,v)=MH^{\pi}_{\prod_i^k \mathbb CP^{n_i}} (t,u,v) = \sum_{i=1}^k MH^{\pi}_{\mathbb CP^{n_i}}(t,u,v).$$

(2) It follows from \cite{W}  that $X'$ is a so-called Bott manifold, i.e., there is a sequence of fiber bundles over complex projective spaces with a complex projective space as a fiber:
$$X'=B_k \xrightarrow {p_k} B_{k-1} \to \cdots \to B_i \xrightarrow {p_i} B_{i-1} \to \cdots B_2 \xrightarrow {p_1} B_1 \to B_0= \{pt\}$$
where $p_1:B_1 =\mathbb C^{n_1+1} \to B_0=\{pt\}$ and each $p_i:\mathbb P(\mathbb C^{n_i+1} \times B_{i-1}) \to B_{i-1}$ is the projection map of the projectivization $\mathbb P(\mathbb C^{n_i+1} \times B_{i-1})$ of the product $\mathbb C^{n_i+1} \times B_{i-1}$ or a Whitney sum of trivial complex line bundles over $B_{i-1}$. This sequence is sometimes called a Bott tower. Note that the fiber space of $p_i$ is nothing but the complex projective space $\mathbb CP^{n_i}$. Then it follows from Deligne's degeneration of Leray spectral sequence (see \cite{PS}) that for each projection map $p_i:B_i \to B_{i-1}$ the cohomology of $B_i$  with mixed Hodge structure is the tensor product of the cohomology of the base $B_{i-1}$ and the fiber $\mathbb CP^{n_i}$ with mixed Hodge structures. Therefore the mixed Hodge polynomial $MH_X(t,u,v)$ coincides with that of the product of these complex projective spaces: $$MH_X(t,u,v)=MH_{\prod_i^k \mathbb CP^{n_i}} (t,u,v) = \prod_{i=1}^k MH_{\mathbb CP^{n_i}}(t,u,v).$$
\end{proof}
It follows from the above Theorem \ref{toric MHP}
that the cohomological and homotopical Poincar\'e polynomials of a
rationally elliptic toric manifold are the same as those of a product of complex projective spaces, thus as explained in the introduction we get the following:
\begin{cor} The Hilali conjecture holds for rationally elliptic toric manifolds. 
\end{cor}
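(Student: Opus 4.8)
The plan is to reduce the Hilali inequality for a rationally elliptic toric manifold $X$ to an elementary numerical inequality by invoking Theorem \ref{toric MHP}. Since a compact smooth toric variety is simply connected, the Hilali conjecture \eqref{hilali} for $X$ amounts to the single inequality $P^{\pi}_X(1) \le P_X(1)$. By Theorem \ref{toric MHP}, specializing the mixed Hodge polynomials at $u = v = 1$ (equivalently, by the sentence preceding the corollary), the homotopical and cohomological Poincar\'e polynomials of $X$ coincide with those of a product $\prod_{i=1}^k \mathbb CP^{n_i}$ with $\sum_{i=1}^k n_i = n$. Thus it suffices to verify $P^{\pi}_{Y}(1) \le P_Y(1)$ for $Y = \prod_{i=1}^k \mathbb CP^{n_i}$.

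First I would discard any factors with $n_i = 0$, since $\mathbb CP^0$ is a point, contributing $0$ additively to $P^{\pi}$ and a factor $1$ multiplicatively to $P$; so without loss of generality every $n_i \ge 1$. Next, using additivity of $P^{\pi}$ and multiplicativity of $P$ under Cartesian products, together with the values $P^{\pi}_{\mathbb CP^{n_i}}(1) = 2$ and $P_{\mathbb CP^{n_i}}(1) = n_i + 1$ recorded in Example \ref{ex-cpn}, the desired inequality becomes
\[
2k = P^{\pi}_Y(1) \le P_Y(1) = \prod_{i=1}^k (n_i + 1).
\]
Since each $n_i + 1 \ge 2$, the right-hand side is at least $2^k$, and the elementary inequality $2^k \ge 2k$ — valid for every integer $k \ge 1$ by a one-line induction — closes the argument.

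There is essentially no obstacle here: all the substance lies in Theorem \ref{toric MHP}, and once that structural result is available the corollary is immediate. I would only add, as a remark, that the inequality is strict whenever $k \ge 3$ or some $n_i \ge 2$, so the Hilali bound is attained with equality precisely for $Y = \mathbb CP^1$ and $Y = \mathbb CP^1 \times \mathbb CP^1$, which is consistent with the threshold computation $\frak{pp}(\mathbb CP^1;1) = 3$ recorded earlier.
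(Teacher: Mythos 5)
Your proof is correct and is essentially the same as the paper's own direct argument: both invoke Theorem \ref{toric MHP} to reduce to the product $\prod_{i=1}^k \mathbb{CP}^{n_i}$, compute $P^{\pi}_X(1)=2k$ and $P_X(1)=\prod_{i=1}^k(n_i+1)\ge 2^k$, and finish with $2^k\ge 2k$. (The paper additionally notes an alternative route via formality of toric manifolds and the known validity of the Hilali conjecture for formal spaces, and splits the final numerical step into three cases $k=1,2,\ge 3$ rather than stating $2^k\ge 2k$ uniformly, but these are cosmetic differences; your equality analysis is a nice complement consistent with the paper's threshold computations.)
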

\begin{proof} Since a rationally elliptic toric manifold is formal, thus it follows that the Hilali conjecture holds (see \cite{BMM}). Here we give another simple direct proof, using the above calculation. Let $X$ be a rationally elliptic toric manifold described as (\ref{quotient}). Then it follows from the above Theorem \ref{toric MHP} that we have
$$P^{\pi}_X(1) = MH^{\pi}_X(1,1,1) = \sum_{i=1}^k (1+1) =2k, \quad P_X(1) = MH_X(1,1,1) = \prod_{i=1}^k (1+n_i).$$
Since each $n_i \ge 1$, we have 
$$\prod_{i=1}^k (1+n_i) \ge 2^k.$$
\begin{enumerate}
\item If $k=1$, then $2k=2 =2^1 \le 1+n_1$, thus $P^{\pi}_X(1) \le P_X(1)$.
\item If $k=2$, then $2k=4 =2^2 \le \prod_{i=1}^2 (1+n_i)$, thus $P^{\pi}_X(1) \le P_X(1)$.
\item If $k \ge 3$, then $2k < 2^k \le \prod_{i=1}^k (1+n_i)$, thus $P^{\pi}_X(1) < P_X(1)$.
\end{enumerate}
Therefore, in any case we do have $P^{\pi}_X(1) \le P_X(1)$.
\end{proof}
\begin{cor}\label{cor-toric} Let $X$ be a rationally elliptic toric manifold and let
$$MH_X(t,u,v)=MH_{\prod_i^k \mathbb CP^{n_i}} (t,u,v) ,\qquad MH^{\pi}_X(t,u,v)=MH^{\pi}_{\prod_i^k \mathbb CP^{n_i}} (t,u,v).$$
If each $n_i \ge 2$, then $\frak{mhp}(X;1,1,1)=2$, and if $n_i=1$ for some $i$, then $\frak{mhp}(X;1,1,1)=3.$
\end{cor}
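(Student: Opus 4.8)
The plan is to reduce everything, via Theorem~\ref{toric MHP}, to a two-variable polynomial comparison for a product of projective spaces, and then to combine the one-factor values computed in Example~\ref{ex-cpn} with the product estimate for $\frak{mhp}$ recorded in the Remark following its definition.

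First, the reduction. By Theorem~\ref{toric MHP},
$$MH_X(t,u,v)=\prod_{i=1}^{k}\bigl(1+t^{2}uv+\cdots+t^{2n_i}(uv)^{n_i}\bigr),\qquad MH^{\pi}_X(t,u,v)=\sum_{i=1}^{k}\bigl(t^{2}uv+t^{2n_i+1}(uv)^{n_i+1}\bigr),$$
and these are literally the mixed Hodge polynomials of $\prod_{i=1}^{k}\mathbb{CP}^{n_i}$ as well; in particular both depend on $(u,v)$ only through the product $w:=uv$. Since $\{uv:u\ge 1,\ v\ge 1\}=[1,\infty)$, and since by (\ref{mh-pi-multi}) and (\ref{mh-pi-additive}) the inequality $MH^{\pi}_{X^{n}}<MH_{X^{n}}$ is the same as $n\,MH^{\pi}_X<(MH_X)^{n}$, the number $\frak{mhp}(X;1,1,1)$ is the least $n_0$ for which $n\,MH^{\pi}_X<(MH_X)^{n}$ holds as an inequality of polynomials in $(t,w)$ on $[1,\infty)^{2}$ for every $n\ge n_0$. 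As these polynomials are the same as those of $\prod_i\mathbb{CP}^{n_i}$, we have $\frak{mhp}(X;1,1,1)=\frak{mhp}\bigl(\prod_i\mathbb{CP}^{n_i};1,1,1\bigr)$, so from now on $X$ may be taken to be this product.

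For the upper bound I would invoke the product estimate $\frak{mhp}(Y\times Z;1,1,1)\le\op{max}\{\frak{mhp}(Y;1,1,1),\frak{mhp}(Z;1,1,1)\}$. Its hypothesis holds because $MH_{\mathbb{CP}^{n_i}}(1,1,1)=n_i+1\ge 2$ (and likewise $MH(1,1,1)\ge 2$ for every nonempty partial product), so iterating over the $k$ factors gives $\frak{mhp}\bigl(\prod_i\mathbb{CP}^{n_i};1,1,1\bigr)\le\op{max}_i\frak{mhp}(\mathbb{CP}^{n_i};1,1,1)$, which by Example~\ref{ex-cpn} equals $2$ when every $n_i\ge 2$ and $3$ when some $n_i=1$. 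For the matching lower bounds I would exhibit explicit points of $[1,\infty)^{2}$ where the relevant step fails. If every $n_i\ge 2$, I would falsify the $n=1$ inequality $MH^{\pi}_X<MH_X$: already for one factor $\mathbb{CP}^{n}$ ($n\ge 2$), putting $t=1$ and $w=2$ yields $MH^{\pi}_{\mathbb{CP}^{n}}-MH_{\mathbb{CP}^{n}}=(w+w^{n+1})-(1+w+\cdots+w^{n})=w^{n+1}-1-w^{2}-\cdots-w^{n}=2^{n+1}-1-(2^{n+1}-4)=3>0$, so $\frak{mhp}(\mathbb{CP}^{n};1,1,1)\ge 2$. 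If some $n_i=1$, I would falsify the $n=2$ inequality $2\,MH^{\pi}_X<(MH_X)^{2}$: for the factor $\mathbb{CP}^{1}$ one has $MH^{\pi}_{\mathbb{CP}^{1}}(1,1)=MH_{\mathbb{CP}^{1}}(1,1)=2$, so $2\cdot 2\not<2^{2}$ at $(t,w)=(1,1)$ and $\frak{mhp}(\mathbb{CP}^{1};1,1,1)\ge 3$.

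The reduction and the upper bound are formal; the delicate part is the lower bound when several factors are present, i.e.\ showing that the bound $\op{max}_i\frak{mhp}(\mathbb{CP}^{n_i};1,1,1)$ is actually attained. There one must weigh the ``diagonal'' monomials $t^{2n_i+1}w^{n_i+1}$ from the various summands of $MH^{\pi}_X$ against the single product $(MH_X)^{n}$, and — as the example $\frak{pp}(S^{2n}\times S^{2n};1)=2<3=\frak{pp}(S^{2n};1)$ of Remark~\ref{form-product} warns — a product can have a strictly smaller stabilization threshold than its factors. So I expect this last step to require a careful monomial-by-monomial estimate of $(MH_X)^{n}-n\,MH^{\pi}_X$ on $[1,\infty)^{2}$ rather than a black-box appeal to the product inequality, and this is where the real work lies.
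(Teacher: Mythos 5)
Your reduction via Theorem \ref{toric MHP} and your upper bound are sound and essentially parallel the paper's own argument: where you iterate the product estimate $\frak{mhp}(Y\times Z;1,1,1)\le\op{max}\{\frak{mhp}(Y;1,1,1),\frak{mhp}(Z;1,1,1)\}$ from the remark following Theorem \ref{mixedcase}, the paper proves the same estimate in place, observing $MH_{\mathbb CP^{n_i}}\ge 2$ on $[1,\infty)^3$ so that each square is $\ge 4$, then showing $d_1+\cdots+d_k<d_1\cdots d_k$ for reals $d_i\ge 4$ by induction. These are the same estimate, differently packaged; both yield only the upper bounds $\frak{mhp}(X;1,1,1)\le 2$ (resp.\ $\le 3$).

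The concern you raise in your last paragraph is genuine, and in fact the paper shares it. Your evaluations at $(t,w)=(1,2)$ and $(1,1)$ correctly give $\frak{mhp}(\mathbb CP^{n};1,1,1)\ge 2$ for $n\ge 2$ and $\frak{mhp}(\mathbb CP^{1};1,1,1)\ge 3$, which finishes the single-factor case $k=1$. But for $k\ge 2$ neither your proof nor the paper's establishes the lower bound; the paper's proof ends as soon as the upper-bound inequalities are displayed. And the multi-factor lower bound can in fact fail, exactly as the example $\frak{pp}(S^{2n}\times S^{2n};1)=2<3=\frak{pp}(S^{2n};1)$ of Remark \ref{form-product} warns. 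For instance, for $k\ge 2$ with all $n_i\ge 2$, each monomial $t^{2n_i+1}(uv)^{n_i+1}$ of $MH^{\pi}_X$ is dominated on $[1,\infty)^3$ by a term $t^{2(n_i+1)}(uv)^{n_i+1}$ of $MH_X$ (take $t^{2n_i}(uv)^{n_i}$ from the $i$-th factor and $t^2uv$ from another, with matching multiplicity when several $n_i$ coincide), so $MH^{\pi}_X<MH_X$ holds everywhere and $\frak{mhp}(X;1,1,1)=1$, not $2$; similarly $\frak{mhp}(\mathbb CP^{1}\times\mathbb CP^{1};1,1,1)=2$, not $3$. So the asserted equalities hold only for $k=1$, and for $k\ge 2$ what is actually proved (by you and by the paper) is the inequality $\le$. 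Your remark that the multi-factor lower bound ``is where the real work lies'' correctly identifies a step that, as stated, cannot be carried out.
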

\begin{proof}
$$MH_X(t,u,v)=MH_{\prod_i^k \mathbb CP^{n_i}} (t,u,v) = \prod_{i=1}^k MH_{\mathbb CP^{n_i}}(t,u,v),$$
$$MH^{\pi}_X(t,u,v)=MH^{\pi}_{\prod_i^k \mathbb CP^{n_i}} (t,u,v)= \sum_{i=1}^k MH^{\pi}_{\mathbb CP^{n_i}}(t,u,v).$$
(1) If each $n_i \ge 2$, then it follows from Example \ref{ex-cpn} that 
$$2MH^{\pi}_{\mathbb CP^{n_i}}(t,u,v) < \Bigl (MH_{\mathbb CP^{n_i}}(t,u,v) \Bigr )^2,$$
 hence we have
$$2 \Bigl (\sum_{i=1}^k MH^{\pi}_{\mathbb CP^{n_i}}(t,u,v) \Bigr)  = \sum_{i=1}^k 2MH^{\pi}_{\mathbb CP^{n_i}}(t,u,v) \\\
 < \sum_{i=1}^k \Bigl (MH_{\mathbb CP^{n_i}}(t,u,v) \Bigr)^2.$$
Now,  for $\forall t \ge 1, \forall u \ge 1$ and $\forall v \ge 1$ we have
$$\sum_{i=1}^k \Bigl (MH_{\mathbb CP^{n_i}}(t,u,v) \Bigr)^2 < \prod_{i=1}^k \Bigl (MH_{\mathbb CP^{n_i}}(t,u,v) \Bigr)^2.$$
To show this, first we note that each $MH_{\mathbb CP^{n_i}}(t,u,v) \ge 2$ for $\forall t \ge 1, \forall u \ge 1$ and $\forall v \ge 1$. Then it suffices to show that if each $d_i \ge 4 (1 \le i \le k)$, then
$$d_1 + d_2 + \cdots + d_k < d_1 d_2\cdots d_k.$$
Indeed, this follows by induction. Clearly $a_1+a_2 < a_1a_2$ since 
$$a_1a_2 -(a_1+a_2) = (a_1 -1)(a_2-1) -1 \ge 3\times 3-1>0.$$
Suppose that $d_1 + d_2 + \cdots + d_{k-1} < d_1 d_2\cdots d_{k-1}.$ Then
\begin{align*}
d_1 + d_2 + \cdots + d_{k-1} + d_k & = (d_1 + d_2 + \cdots + d_{k-1}) +d_k  \\
& < d_1 d_2 \cdots d_{k-1} + d_k \\
& <  d_1 d_2 \cdots d_{k-1} d_k \, .
\end{align*}
Therefore we have
$$2 \Bigl (\sum_{i=1}^k MH^{\pi}_{\mathbb CP^{n_i}}(t,u,v) \Bigr) < \Bigl (\prod_{i=1}^k MH_{\mathbb CP^{n_i}}(t,u,v) \Bigr)^2.$$
(2) If $n_i=1$ for some $i$, then it follows from Example \ref{ex-cpn} that $\frak{mhp}(\mathbb CP^1;1,1,1)=3$, i.e.,
$3MH^{\pi}_{\mathbb CP^1}(t,u,v) < \bigl (MH_{\mathbb CP^1}(t,u,v) \bigr )^3$. Surely for the other ones we have
$3MH^{\pi}_{\mathbb CP^{n_j}}(t,u,v) < \bigl (MH_{\mathbb CP^{n_j}}(t,u,v) \bigr )^3$. Hence by the same argument as above we have
$$3 \Bigl (\sum_{i=1}^k MH^{\pi}_{\mathbb CP^{n_i}}(t,u,v) \Bigr) < \Bigl (\prod_{i=1}^k MH_{\mathbb CP^{n_i}}(t,u,v) \Bigr)^3.$$
Hence $\frak{mhp}(X;1,1,1)=3.$
\end{proof}
\begin{rem}\label{rem-toric} Even if we fix $u=1$ and $v=1$ in the above proof of Corollary \ref{cor-toric}, we have the same proof, therefore we have that
if each $n_i \ge 2$, then $\frak{pp}(X;1)=2$, and if $n_i=1$ for some $i$, then $\frak{pp}(X;1)=3.$
\end{rem}
\subsubsection{Arrangements of linear subspaces} G. Debongnie
(cf. \cite{deb}) described the structure of arrangements of subspaces
in $\C^n$ which complements are rationally elliptic. If follows that
such complements are products of $\prod_i \left (\C^{n_i+1}\setminus 0 \right )$.
Combining this with  calculation in \S \ref{complementtopoint},  we obtain:

\begin{thm}\label{arrangements} The homotopical and cohomological mixed Hodge
  polynomials of a simply connected rationally elliptic complement $X$
  of an arrangement of linear subspaces are as follows:
\begin{enumerate}
\item  
$MH^{\pi}_X(t,u,v) = MH^{\pi} _{\prod^k_i \left (\C^{n_i+1}\setminus 0 \right )} (t,u,v) =
\sum_{i=1}^k MH^{\pi}_{\C^{n_i+1}\setminus 0}(t,u,v)$, i.e.,  
$$ MH^{\pi}_X(t,u,v) = \sum_{i=1}^k t^{2n_i+1}(uv)^{n_i+1}.$$
\item $MH_X(t,u,v)=MH_{\prod_i^k \left (\C^{n_i+1}\setminus 0 \right )} (t,u,v) = \prod_{i=1}^k MH_{\C^{n_i+1}\setminus 0 }(t,u,v)$, i.e.,
$$MH_X(t,u,v) = \prod_{i=1}^k \Bigl (1 + t^{2n_i+1}(uv)^{n_i+1} \Bigr ).$$
\end{enumerate}
\end{thm}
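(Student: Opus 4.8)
The plan is to combine Debongnie's structural description of such complements with the multiplicativity of $MH$ and the additivity of $MH^{\pi}$ already recorded in this section, and then to substitute the two formulas computed in \S\ref{complementtopoint}.

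First I would invoke \cite{deb}: if the complement $X$ of an arrangement of linear subspaces is simply connected and rationally elliptic, then $X$ is isomorphic, as a quasi-projective variety, to a product $\prod_{i=1}^{k}(\C^{n_i+1}\setminus 0)$ for suitable integers $n_i$. Here simple-connectivity of $X$ forces each $n_i\ge 1$ — equivalently, no factor $\C^{\times}$ can occur, since $\pi_1(\C^{\times})=\Z$ — and in particular every factor $\C^{n_i+1}\setminus 0$ is itself simply connected, so that the computation of \S\ref{complementtopoint} applies to it.

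Next, both $MH_X(t,u,v)$ and $MH^{\pi}_X(t,u,v)$ depend only on the isomorphism class of $X$ as a quasi-projective variety, by functoriality of the canonical mixed Hodge structures on $H^{*}(X;\C)$ (\cite{De1}) and on the dual homotopy groups (\cite{Mo}, \cite{Hain 1}). Hence it suffices to compute these polynomials for the product, and applying the multiplicativity $MH_{X\times Y}=MH_X\cdot MH_Y$ (compatibility of mixed Hodge structures with tensor products together with the K\"unneth formula) and the additivity $MH^{\pi}_{X\times Y}=MH^{\pi}_X+MH^{\pi}_Y$ (the isomorphism $\pi_{*}(X\times Y)=\pi_{*}(X)\oplus\pi_{*}(Y)$ together with the fact that a direct sum of mixed Hodge structures, and its dual, carries the direct-sum mixed Hodge structure), we obtain
$$MH_X(t,u,v)=\prod_{i=1}^{k}MH_{\C^{n_i+1}\setminus 0}(t,u,v),\qquad MH^{\pi}_X(t,u,v)=\sum_{i=1}^{k}MH^{\pi}_{\C^{n_i+1}\setminus 0}(t,u,v).$$
Substituting $MH_{\C^{n_i+1}\setminus 0}(t,u,v)=1+t^{2n_i+1}(uv)^{n_i+1}$ and $MH^{\pi}_{\C^{n_i+1}\setminus 0}(t,u,v)=t^{2n_i+1}(uv)^{n_i+1}$ from \S\ref{complementtopoint} yields exactly the asserted expressions.

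The main obstacle is the first step: for the mixed Hodge structures to transport, one genuinely needs Debongnie's product decomposition to be realized by a regular isomorphism of varieties rather than merely a rational homotopy equivalence, since neither $MH_X$ nor $MH^{\pi}_X$ is a homotopy invariant. Granting the algebraic product structure, everything else is formal; if that reference supplies only a statement up to rational homotopy, one must instead compute the weight and Hodge filtrations on $X$ directly from the arrangement, for which the Gysin-sequence computation on a blow-up in \S\ref{complementtopoint}, combined with the Hurewicz isomorphism in the mixed-Hodge category (\cite{Hain 1}), is the template.
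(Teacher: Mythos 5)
Your proposal matches the paper's own argument: Debongnie's structure theorem gives the decomposition $X\cong\prod_{i=1}^{k}(\C^{n_i+1}\setminus 0)$ as quasi-projective varieties, after which multiplicativity of $MH$, additivity of $MH^{\pi}$, and the formulas from \S\ref{complementtopoint} produce the stated expressions. Your caution about whether \cite{deb} gives an isomorphism of varieties rather than only a rational homotopy equivalence is exactly the right point to check, and it does resolve favorably: Debongnie's result is structural on the arrangement itself (a rationally elliptic complement forces the arrangement to be a coordinate-product arrangement), so the product decomposition is geometric and the mixed Hodge polynomials transport as you say.
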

In particular, we obtain: 
\begin{cor}\label{cor-arrangements} In notations of  Theorem
  \ref{arrangements}, we have 
\begin{equation*}
\frak{pp}(X;1)=1 \quad \text{and} \quad \frak{mhp}(X; 1,1,1)=1.
\end{equation*}
\end{cor}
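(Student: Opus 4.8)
The plan is to feed the explicit formulas of Theorem~\ref{arrangements} into the multiplicativity and additivity relations~(\ref{mh-pi-multi}) and~(\ref{mh-pi-additive}), which reduces the whole statement to an elementary inequality between a sum and a product. I will carry out the argument for $\frak{mhp}(X;1,1,1)$; the claim for $\frak{pp}(X;1)$ then follows by specializing $u=v=1$, or equivalently by running the identical computation with $P_X(t)=\prod_{i=1}^k(1+t^{2n_i+1})$ and $P^{\pi}_X(t)=\sum_{i=1}^k t^{2n_i+1}$ in place of the mixed Hodge polynomials.

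First I would abbreviate $w_i:=t^{2n_i+1}(uv)^{n_i+1}$, so that Theorem~\ref{arrangements} reads
$$MH^{\pi}_X(t,u,v)=\sum_{i=1}^k w_i,\qquad MH_X(t,u,v)=\prod_{i=1}^k(1+w_i).$$
If $k=0$ then $X$ is rationally a point and the desired inequalities are trivial, so assume $k\ge 1$. For $t\ge 1$, $u\ge 1$, $v\ge 1$ every $w_i\ge 1$, and expanding the product gives
$$MH_X(t,u,v)=1+\sum_{i=1}^k w_i+(\text{a sum of nonnegative monomials}),$$
so on the region $t,u,v\ge 1$ we have simultaneously $MH_X(t,u,v)>MH^{\pi}_X(t,u,v)$ and $MH_X(t,u,v)\ge 1+w_1\ge 2$.

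Next, using $MH_{X^n}=(MH_X)^n$ and $MH^{\pi}_{X^n}=n\,MH^{\pi}_X$ from~(\ref{mh-pi-multi}) and~(\ref{mh-pi-additive}), for every integer $n\ge 1$ and all $t,u,v\ge 1$ I would chain
$$n\,MH^{\pi}_X(t,u,v)<n\,MH_X(t,u,v)\le\bigl(MH_X(t,u,v)\bigr)^n,$$
the last step being the bound $n\le\bigl(MH_X(t,u,v)\bigr)^{n-1}$, which holds because $MH_X(t,u,v)\ge 2$ and $2^{n-1}\ge n$ for all $n\ge 1$ (a one-line induction). This is precisely $MH^{\pi}_{X^n}(t,u,v)<MH_{X^n}(t,u,v)$ throughout $\{t\ge 1,\,u\ge 1,\,v\ge 1\}$ for every $n\ge 1$, hence $\frak{mhp}(X;1,1,1)\le 1$; since the threshold is a positive integer, it equals $1$, and the same computation with $u=v=1$ gives $\frak{pp}(X;1)=1$.

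There is essentially no obstacle here; the only points that require a word of care are that we genuinely need $w_i\ge 1$ — which is exactly why the statement is restricted to $t,u,v\ge 1$ — and the degenerate case $k=0$, which must be handled separately.
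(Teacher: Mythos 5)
Your argument is correct, and since the paper presents Corollary~\ref{cor-arrangements} as an immediate consequence of Theorem~\ref{arrangements} without further proof, your computation is exactly the natural way to make that step explicit. The two key points you isolate — that $MH_X>MH^{\pi}_X$ and $MH_X\ge 2$ on the region $t,u,v\ge 1$, which together give $n\,MH^{\pi}_X<n\,MH_X\le (MH_X)^n$ for every $n\ge 1$ via $2^{n-1}\ge n$ — are the same bootstrap mechanism the paper uses in the induction step of the proof of Theorem~\ref{<3}, so this is in the spirit of the paper's methods.
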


\subsection{Homotopical $E$-function}
Specialization $t=-1$ of the homotopical Poincare polynomial
$MH^{\pi}_X(t,u,v)$ is a homotopical analog of $E$-functions 
(cf. \cite{batyrev}) and is well behaved in several constructions 
described below.

\begin{defn} The homotopical $E$-polynomial $E^{\pi}(X,u,v)$ of a
  complex algebraic variety $X$ which is rational elliptic is defined as follows:
$$E^{\pi}(X,u,v) := MH^{\pi}_X(-1,u,v). $$
\end{defn}
Recall that the homological $E$-function is defined as $E(X,u,v):=
MH^c_X(-1,u,v)$, where one uses in (\ref{homological}) the compactly
supported cohomology. 
 $E(X,u,v)$ satisfies the additivity relation: for an algebraic
 subvariety $Y \subset X$ one has (cf. \cite{batyrev})
\begin{equation}\label{chi-y}
E(X,u,v) =E(Y,u,v) +E(X \setminus Y,u,v).
\end{equation}
This follows from the long exact sequence of compactly supported cohomology groups:
$$ \cdots \to H^k_c(X \setminus Y) \to H^k_c(X) \to H^k_c(Y) \to H^{k+1}_c(X \setminus Y) \to \cdots .$$
Additivity relation for the homotopical $E$-polynomials comes in the
context of locally trivial fibrations 
\begin{equation}\label{loctrivial}F \hookrightarrow E \to B
\end{equation} of
pointed complex algebraic varieties of rationally elliptic $E,F,B$, which induces a long exact sequence of homotopy groups with mixed Hodge structures (see \cite[Theorem 4.3.4]{Hain 1}):
\begin{equation}\label{longexact}
 \cdots \to \pi_k(F) \to \pi_k(E) \to \pi_k(B) \to
  \pi_{k-1} (F) \to \cdots .
\end{equation}
The sequence (\ref{longexact}) yields the following:
\begin{pro} Let $E,F,B$ be simply connected pointed complex algebraic varieties forming 
a locally trivial fibration (\ref{loctrivial}) such that any two of them 
are rationally elliptic. Then we have
$$E^{\pi}(E,u,v) = E^{\pi}(F,u,v)+E^{\pi}(B,u,v).$$
\end{pro}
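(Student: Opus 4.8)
The plan is to derive this additivity formula for $E^{\pi}$ from the long exact homotopy sequence \eqref{longexact} in exactly the way the classical additivity \eqref{chi-y} of $E(X,u,v)$ is derived from the long exact sequence of compactly supported cohomology: the substitution $t=-1$ turns the (graded) dimension count into an Euler characteristic, and Euler characteristics vanish on bounded exact complexes. First I would record that, by the two-out-of-three property of rational ellipticity for a fibration of simply connected spaces (see \cite{FHT}), the hypothesis that any two of $E,F,B$ be rationally elliptic forces the third to be rationally elliptic as well; consequently each of $\pi_k(E)\otimes\Q$, $\pi_k(F)\otimes\Q$, $\pi_k(B)\otimes\Q$ is finite-dimensional and, since the spaces are simply connected of finite formal dimension, nonzero for only finitely many $k$ (and zero for $k\le 1$). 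Tensoring \eqref{longexact} with $\C$ and taking $\C$-linear duals are both exact operations compatible with the mixed Hodge structures (the sequence \eqref{longexact} being one of mixed Hodge structures by \cite[Theorem 4.3.4]{Hain 1}), so we obtain a bounded long exact sequence of finite-dimensional mixed Hodge structures
\[
\cdots \to (\pi_{k-1}(F)\otimes\C)^{\vee}\to(\pi_k(B)\otimes\C)^{\vee}\to(\pi_k(E)\otimes\C)^{\vee}\to(\pi_k(F)\otimes\C)^{\vee}\to(\pi_{k+1}(B)\otimes\C)^{\vee}\to\cdots.
\]

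Next I would invoke Deligne's theorem that the category of mixed Hodge structures is abelian with morphisms strictly compatible with both filtrations, so that $H\mapsto Gr_{F^{\bullet}}^{p}Gr^{W_{\bullet}}_{p+q}(H)$ is an exact functor; hence for any mixed Hodge structure $H$ the Laurent polynomial $e(H;u,v):=\sum_{p,q}\dim\bigl(Gr_{F^{\bullet}}^{p}Gr^{W_{\bullet}}_{p+q}H\bigr)u^pv^q$ is additive on short exact sequences. Applying $Gr_{F^{\bullet}}^{p}Gr^{W_{\bullet}}_{p+q}$ to the bounded exact sequence above produces, for each $(p,q)$, a bounded exact complex of finite-dimensional vector spaces, whose Euler characteristic is $0$. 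Reading the terms as a cochain complex with $C^{3k}=(\pi_k(B)\otimes\C)^{\vee}$, $C^{3k+1}=(\pi_k(E)\otimes\C)^{\vee}$, $C^{3k+2}=(\pi_k(F)\otimes\C)^{\vee}$ (consecutive degrees, since $(\pi_{k+1}(B)\otimes\C)^\vee$ sits in degree $3k+3$), and using $(-1)^{3k}=(-1)^k$, the vanishing Euler characteristic, after multiplying by $u^pv^q$ and summing over $p,q$, becomes
\[
\sum_{k}(-1)^k\Bigl[e\bigl((\pi_k(B)\otimes\C)^{\vee}\bigr)-e\bigl((\pi_k(E)\otimes\C)^{\vee}\bigr)+e\bigl((\pi_k(F)\otimes\C)^{\vee}\bigr)\Bigr]=0.
\]
Since $E^{\pi}(X,u,v)=MH^{\pi}_X(-1,u,v)=\sum_k(-1)^k e\bigl((\pi_k(X)\otimes\C)^{\vee}\bigr)$ (the sum running effectively over $k\ge 2$ by simple connectivity), this identity is precisely $E^{\pi}(B,u,v)-E^{\pi}(E,u,v)+E^{\pi}(F,u,v)=0$, i.e.\ $E^{\pi}(E,u,v)=E^{\pi}(F,u,v)+E^{\pi}(B,u,v)$.

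I do not expect a genuine obstacle: the argument is formal once the pieces are assembled. The two points requiring care are (i) confirming that rational ellipticity propagates through the fibration, which is what guarantees the dualized sequence is bounded with finite-dimensional terms (so that ``Euler characteristic $=0$'' is legitimate), and (ii) the sign bookkeeping — keeping track of how the connecting homomorphisms shift the index so that the alternating sum matches the $t=-1$ specialization of $MH^{\pi}$. Both are routine, and the proof should be short.
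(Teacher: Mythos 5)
Your proof is correct and follows exactly the route the paper intends: the paper simply states, without a written-out proof, that the long exact sequence \eqref{longexact} of homotopy groups with mixed Hodge structures ``yields'' the proposition, and your argument supplies the details of that deduction (ellipticity propagating through the fibration to ensure boundedness and finite-dimensionality, exactness of $Gr_{F^\bullet}^p Gr^{W_\bullet}_{p+q}$ by strictness of morphisms of mixed Hodge structures, and the vanishing of the Euler characteristic of the resulting bounded exact complex, with the correct sign accounting $(-1)^{3k}=(-1)^k$).
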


In the case of homological $E$-polynomials one has multiplicativity in
the case of locally trivial fibrations (\ref{loctrivial})\footnote{for which one does not need to assume that spaces are rationally elliptic}

\begin{thm}(see \cite{CMS, CLMS, CLMS2, MS}) Let $F \hookrightarrow E \to B$ be a smooth complex algebraic fiber bundles. If the fundamental group $\pi_1(B)$ of the base space $B$ acts trivially on the cohomology $H^*(F;\mathbb Q)$ of the fiber space $F$, then, 
$$E(E,u,v) =E(F,u,v) \cdot E(B,u,v).$$
\end{thm}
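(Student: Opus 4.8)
The plan is to compute $E(E,u,v)$ via the Leray spectral sequence of the algebraic morphism $f\colon E\to B$, working throughout in the category of mixed Hodge structures and extracting the identity as an Euler-characteristic computation, so that no degeneration statement is needed. Since $E(-,u,v)$ is built from compactly supported cohomology, the relevant spectral sequence is
$$E_2^{p,q}=H^p_c\bigl(B;R^qf_!\,\mathbb Q_E\bigr)\ \Longrightarrow\ H^{p+q}_c(E;\mathbb Q),$$
arising from $R\Gamma_c(E)=R\Gamma_c\bigl(B,Rf_!\mathbb Q_E\bigr)$. First I would invoke the theory of mixed Hodge modules (see \cite{PS}, and for this statement \cite{CMS, CLMS, CLMS2, MS}) to equip every page of this spectral sequence with a mixed Hodge structure for which all differentials are morphisms of mixed Hodge structures; this step uses that $f$ is a morphism of complex algebraic varieties.

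Next I would identify the $E_2$-page. Because $f$ is a fiber bundle, $R^qf_!\,\mathbb Q_E$ is a local system on $B$ with stalk $H^q_c(F;\mathbb Q)$, whose monodromy is the action of $\pi_1(B)$ on $H^q_c(F;\mathbb Q)$. On the smooth (hence canonically oriented) variety $F$, Poincar\'e--Lefschetz duality identifies $H^q_c(F;\mathbb Q)$ with the dual of $H^{2d-q}(F;\mathbb Q)$ ($d=\dim_{\mathbb C}F$) $\pi_1(B)$-equivariantly, so the hypothesis that $\pi_1(B)$ acts trivially on $H^*(F;\mathbb Q)$ forces $R^qf_!\,\mathbb Q_E$ to be the constant local system with fiber the mixed Hodge structure $H^q_c(F;\mathbb Q)$. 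Hence, by the compatibility of cohomology with a constant coefficient system with tensor products of mixed Hodge structures, $E_2^{p,q}\cong H^p_c(B;\mathbb Q)\otimes H^q_c(F;\mathbb Q)$ as mixed Hodge structures.

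To conclude, I would pass to the Grothendieck ring $K_0(\mathrm{MHS})$: in any abelian category, replacing a bounded spectral sequence page $E_r$ by $E_{r+1}=H(E_r,d_r)$ leaves the alternating sum $\sum_{p,q}(-1)^{p+q}[E_r^{p,q}]$ unchanged, so
$$\sum_k(-1)^k[H^k_c(E)]=\sum_{p,q}(-1)^{p+q}[E_2^{p,q}]=\Bigl(\sum_p(-1)^p[H^p_c(B)]\Bigr)\Bigl(\sum_q(-1)^q[H^q_c(F)]\Bigr).$$
Applying the ring homomorphism $K_0(\mathrm{MHS})\to\mathbb Z[u,v]$ sending a mixed Hodge structure $H$ to $\sum_{p,q}\dim\bigl(Gr_F^pGr^W_{p+q}H\bigr)u^pv^q$ (a ring homomorphism since Hodge types add under tensor product, and well defined on $K_0$ by strictness of morphisms of mixed Hodge structures) turns this into $E(E,u,v)=E(F,u,v)\cdot E(B,u,v)$.

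The main obstacle is the very first step --- producing the functorial mixed Hodge structure on the Leray spectral sequence, compatibly with the identification of $E_2$; this is precisely what Saito's formalism (or, alternatively, the Deligne--Zucker construction together with $E_2$-degeneration for smooth projective morphisms and a d\'evissage over $B$) delivers, and everything afterwards is formal bookkeeping. I would also remark that when the bundle is Zariski-locally trivial one can avoid this machinery entirely: stratify $B$ into locally closed subsets over which $E$ becomes a product $B_i\times F$, and combine the additivity relation (\ref{chi-y}) with the K\"unneth formula $E(X\times Y,u,v)=E(X,u,v)\,E(Y,u,v)$.
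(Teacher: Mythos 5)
The paper does not prove this statement: it cites \cite{CMS, CLMS, CLMS2, MS} and notes that the relation is a reformulation of the $\chi_y$-multiplicativity on p.\ 935 of \cite{CLMS2}, so you are supplying a proof where the authors defer to the literature. Your overall strategy --- promoting the Leray spectral sequence for $R\Gamma_c$ to a spectral sequence of mixed Hodge structures via Saito's theory, computing Euler characteristics in $K_0(\mathrm{MHS})$, and specializing along the ring homomorphism to $\mathbb Z[u,v]$ --- is exactly the one underlying those references, and the closing $K_0$-bookkeeping (invariance of the alternating sum across pages, multiplicativity of the $E$-specialization) is correct as written.

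There is, however, a genuine gap at the identification of the $E_2$-page. Triviality of the monodromy on $H^*_c(F;\mathbb Q)$ --- which you correctly deduce from the hypothesis on $H^*(F;\mathbb Q)$ via $\pi_1(B)$-equivariant Poincar\'e--Lefschetz duality --- only tells you that $R^qf_!\,\mathbb Q_E$ is constant as a $\mathbb Q$-local system. To obtain $E_2^{p,q}\cong H^p_c(B;\mathbb Q)\otimes H^q_c(F;\mathbb Q)$ \emph{as mixed Hodge structures} you need the coefficient object to be constant as a variation of mixed Hodge structure (equivalently, as the underlying mixed Hodge module), not merely as a sheaf of $\mathbb Q$-vector spaces: a priori the Hodge and weight filtrations on the fibres could vary along $B$ even with trivial underlying local system, and the mixed Hodge structure on $H^p_c(B,\mathbb V)$ --- hence its class in $K_0(\mathrm{MHS})$ --- depends on that variation, not just on the local system. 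What closes the gap is the Theorem of the Fixed Part (Deligne and Schmid for pure polarizable variations, Steenbrink--Zucker and Kashiwara for admissible graded-polarizable variations of mixed Hodge structure), which asserts that the monodromy invariants form a constant sub-variation; admissibility in the required generality is provided by Saito's formalism, so once this lemma is inserted your argument is complete. Your closing remark --- stratifying $B$ and combining additivity (\ref{chi-y}) with the K\"unneth formula when the bundle is Zariski-locally trivial --- does give a clean elementary proof in that case, but Zariski-local triviality is a strictly stronger hypothesis than triviality of the monodromy action and so does not recover the theorem in the stated generality.
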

This is a reformulation of the relation between the Euler
characteristics of bigraded components: $$e^{p,q}(X)=\sum_{k} (-1)^k\dim \Bigl ( Gr_{F^{\bullet}}^{p} Gr^{W_{\bullet}}_{p+q} H^k (X;\mathbb C)  \Bigr) $$
discussed on p.935 of \cite{CLMS2} (the multiplicativity relation is
stated in this paper for $\chi_y$-genus which is a specialization of $E(X,u,v)$.


Finally we note that a homotopy theoretical analog of additivity relation
(\ref{chi-y}) holds. To state it, recall (cf. \cite{Hain 1}, \cite{Hain 2})
that if $(X,Y)$ is a pair of pointed complex algebraic varieties, the
homotopy groups support a mixed Hodge structure such that the homotopy
exact sequence of the pair $(X,Y)$ is an exact sequence of the mixed
Hodge structures. This sequence implies that for rationally elliptic spaces $X$ and
$Y$ such that $\pi_i(X,Y)=0$ for large $i$ 
\begin{equation*}
E^{\pi}(X,Y,u,v):= MH^{\pi}_{(X,Y)}(-1,u,v) =
\sum_{k} (-1)^k\dim  \Bigl (Gr_{\tilde F^{\bullet}}^{p} Gr^{\tilde W_{\bullet}}_{p+q} ((\pi_k(X,Y)\otimes \mathbb C)^{\vee})\Bigr ) u^p v^q
\end{equation*}
is well-defined and 
the following additivity relation holds:
\begin{equation}\label{additivity}
E^{\pi}(X, u,v) = E^{\pi}(Y,u,v) + E^{\pi}(X,Y,u,v).
\end{equation}

Let $X$ be a compact complex algebraic variety and $Y$ be a closed
subvariety of $X$ such that $X\setminus Y$ is smooth,  then for 
homological $E$-functions one has:
$$E(X \setminus Y,u,v) =E(X, Y,u,v),$$
which shows that additivity (\ref{additivity}) corresponds to (\ref{chi-y}).\\

{\bf Acknowledgements:} We would like to thank the referee and Prof. J. Rosenberg for useful suggestions and comments. S.Y. is supported by JSPS KAKENHI Grant Number JP19K03468. \\


\end{document}